\newtheorem{lemma}{Lemma}[section]
\newtheorem{theorem}{Theorem}[section]
\newtheorem{problem}{Problem}
\newtheorem{corollary}{Corollary}[section]
\newtheorem{conjecture}{Conjecture}
\newtheorem{observation}{Observation}
\journal{Discrete Applied Mathematics}
\begin{document}

\begin{frontmatter}

%% Title, authors and addresses

%% use the tnoteref command within \title for footnotes;
%% use the tnotetext command for theassociated footnote;
%% use the fnref command within \author or \address for footnotes;
%% use the fntext command for theassociated footnote;
%% use the corref command within \author for corresponding author footnotes;
%% use the cortext command for theassociated footnote;
%% use the ead command for the email address,
%% and the form \ead[url] for the home page:
%% \title{Title\tnoteref{label1}}
%% \tnotetext[label1]{}
%% \author{Name\corref{cor1}\fnref{label2}}
%% \ead{email address}
%% \ead[url]{home page}
%% \fntext[label2]{}
%% \cortext[cor1]{}
%% \affiliation{organization={},
%%             addressline={},
%%             city={},
%%             postcode={},
%%             state={},
%%             country={}}
%% \fntext[label3]{}

\title{Recent studies on the super edge-magic deficiency of graphs}

%% use optional labels to link authors explicitly to addresses:
%% \author[label1,label2]{}
%% \affiliation[label1]{organization={},
%%             addressline={},
%%             city={},
%%             postcode={},
%%             state={},
%%             country={}}
%%
%% \affiliation[label2]{organization={},
%%             addressline={},
%%             city={},
%%             postcode={},
%%             state={},
%%             country={}}

\author[label1]{Rikio Ichishima}
\affiliation[label1]{organization={Department of Sport and Physical Education, Faculty of Physical Education, Kokushikan University},
  addressline={7-3-1 Nagayama},
  city={Tama-shi},
  postcode={206-8515},
  state={Tokyo},
  country={Japan}}

\author[label2]{S.C.  L\'{o}pez}
\affiliation[label2]{organization={Departament de Matem\`{a}tica, Universitat de Lleida},
  addressline={C/Jaume II, 69, 25001},
  city={Lleida},
%  state={},
  country={Spain}}
\author[label3]{Francesc A. Muntaner-Batle}
\affiliation[label3]{organization={Graph Theory and Applications Research Group, School of Electrical Engineering and Computer Science, Faculty of Engineering and Built Environment, The University of Newcastle},
  addressline={NSW 2308},
%  city={},
%  postcode={},
%  state={},
  country={Australia}}
\author[label4]{Yukio Takahashi}
\affiliation[label4]{organization={Department of Science and Engineering, Faculty of Electronics and Informations, Kokushikan University},
  addressline={4-28-1 Setagaya},
  city={Setagaya-ku},
  postcode={154-8515},
  state={Tokyo},
  country={Japan}}

\begin{abstract}
  %% Text of abstract
A graph $G$ is called edge-magic if there exists a bijective function $f:V\left(G\right) \cup E\left(G\right)\rightarrow \left\{1, 2, \ldots , \left\vert V\left( G\right) \right\vert +\left\vert E\left( G\right) \right\vert \right\}$ such that $f\left(u\right) + f\left(v\right) + f\left(uv\right)$ is a constant for each $uv\in E\left( G\right) $. 
Also, $G$ is said to be super edge-magic if $f\left(V \left(G\right)\right) =\left\{1, 2, \ldots , \left\vert V\left( G\right) \right\vert \right\}$. 
Furthermore, the super edge-magic deficiency $ \mu_{s}\left(G\right)$ of a graph $G$ is defined to be either the smallest nonnegative integer $n$ with the property that $G \cup nK_{1}$ is super edge-magic or $+ \infty$ if there exists no such integer $n$. 
In this paper, we introduce the parameter $l\left(n\right)$ as the minimum size of a graph $G$ of order $n$ for which all graphs of order $n$ and size at least $l\left(n\right)$ have $\mu_{s} \left( G \right)=+\infty $, 
and provide lower and upper bounds for $l\left(G\right)$. 
Imran, Baig, and Fe\u{n}ov\u{c}\'{i}kov\'{a} established that for integers $n$ with $n\equiv 0\pmod{4}$, $ \mu_{s}\left(D_{n}\right) \leq 3n/2-1$, where $D_{n}$ is the cartesian product of the cycle $C_{n}$ of order $n$ and the complete graph $K_{2}$ of order $2$.
We improve this bound by showing that $ \mu_{s}\left(D_{n}\right) \leq n+1$ when $n \geq 4$ is even.
Enomoto, Llad\'{o}, Nakamigawa, and Ringel posed the conjecture that every nontrivial tree is super edge-magic. 
We propose a new approach to attak this conjecture. 
This approach may also help to resolve another labeling conjecture on trees by Graham and Sloane. 

\end{abstract}

%%Graphical abstract
%\begin{graphicalabstract}
%\includegraphics{grabs}
%\end{graphicalabstract}

%%Research highlights
%\begin{highlights}
%\item Research highlight 1
%\item Research highlight 2
%\end{highlights}

\begin{keyword}
  %% keywords here, in the form: keyword \sep keyword
  super edge-magic labeling \sep super edge-magic deficiency \sep super edge-magic tree conjecture \sep well spread set \sep weak Sidon set \sep $\alpha$-valuation \sep graph labeling \sep combinatorial optimization

%% PACS codes here, in the form: \PACS code \sep code

%% MSC codes here, in the form: \MSC code \sep code
%% or \MSC[2008] code \sep code (2000 is the default)
  \MSC 05C78 \sep 43A46 \sep 90C27
\end{keyword}

\end{frontmatter}

%% \linenumbers

%% main text
%\section{}
%\label{}
%%%%%%%%%%%%%%%%%%%%%%%%%%%%%%%%%%%%%%%%%%%%%%%%%%

\section{Introduction}
Unless stated otherwise, the graph-theoretical notation and terminology used here will follow Chartrand and Lesniak \cite{CL}. 
In particular, the \emph{vertex set} of a graph $G$ is denoted by $V \left(G\right)$, while the \emph{edge set} of $G$ is denoted by $E\left (G\right)$. The \emph{cycle} of order $n$ and the \emph{complete graph} of order $n$ are denoted by $C_{n}$ and $K_{n}$, respectively.

For the sake of brevity, we will use the notation $\left[ a, b\right] $ for the interval of integers $x $ such that $a\leq x\leq b$.
Kotzig and Rosa \cite{KR} initiated the study of what they called magic valuations. 
This concept was later named edge-magic labelings by Ringel and Llad\'{o} \cite{RL} and this has become the popular term.
A graph $G$ is called \emph{edge-magic} if there exists a bijective function $f:V\left(G\right) \cup E\left(G\right)\rightarrow \left[1, \left\vert V\left( G\right) \right\vert +\left\vert E\left( G\right) \right\vert \right]$ such that $f\left(u\right) + f\left(v\right) + f\left(uv\right)$ is a constant for each $uv\in E\left( G\right) $. Such a function is called an \emph{edge-magic labeling}. More recently, they have also been referred to as edge-magic total labelings by Wallis \cite{Wallis}.

Enomoto, Llad\'{o}, Nakamigawa, and Ringel \cite{ELNR} introduced a particular type of edge-magic labelings, namely, super edge-magic labelings. 
They defined an edge-magic labeling of a graph $G$ with the additional property that 
$f\left(V \left(G\right)\right)=\left[1, \left\vert V\left( G\right) \right\vert \right]$ to be a \emph{super edge-magic labeling}. 
Thus, a \emph{super edge-magic graph} is a graph that admits a super edge-magic labeling. 

Lately, super edge-magic labelings and super edge-magic graphs are called by Wallis \cite{Wallis} strong edge-magic total labelings and strongly edge-magic graphs, respectively. According to the latest version of the survey on graph labelings by Gallian \cite{Gallian} available to the authors, Hegde and Shetty \cite{HS2} showed that the concepts of super edge-magic graphs and strongly indexable graphs (see \cite{AH} for the definition of a strongly indexable graph) are equivalent.

The following result found in \cite{FIM} provides necessary and sufficent conditions for a graph to be super edge-magic, which will prove later to be useful.

\begin{lemma}
\label{trivial}
A graph $G$ is super edge-magic if and only if
there exists a bijective function $f:V\left( G\right) \rightarrow \left[ 1, \left\vert V\left( G\right) \right\vert \right] $ such that the set 
\begin{equation*}
S=\left\{ f\left( u\right) +f\left( v\right) \left| uv\in E\left( G\right)
\right. \right\}
\end{equation*}%
consists of $\left\vert E\left( G\right) \right\vert $ consecutive integers. In such a case, $f$ extends to a super
edge-magic labeling of $G$ with magic constant $k=\left\vert V\left( G\right) \right\vert +\left\vert E\left( G\right) \right\vert +s$, where $s=\min
\left( S\right) $ and 
\begin{equation*}
S=\left[ k-\left( \left\vert V\left( G\right) \right\vert +\left\vert E\left( G\right) \right\vert \right) ,k-\left( \left\vert V\left( G\right) \right\vert +1\right) \right] \text{.}
\end{equation*}
\end{lemma}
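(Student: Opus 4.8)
The plan is to prove both implications by exploiting the rigidity of the magic condition: once the magic constant $k$ is fixed, the label of an edge $uv$ is forced to equal $k-f\left(u\right)-f\left(v\right)$, so the edge labels are merely an affine image of the vertex-pair sums. Throughout I write $p=\left\vert V\left(G\right)\right\vert$ and $q=\left\vert E\left(G\right)\right\vert$.

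\emph{Necessity.} Suppose $f$ is a super edge-magic labeling of $G$ with magic constant $k$. Since $f$ is a bijection onto $\left[1,p+q\right]$ and $f\left(V\left(G\right)\right)=\left[1,p\right]$, the edge labels must fill the complementary block, i.e.\ $f\left(E\left(G\right)\right)=\left[p+1,p+q\right]$. Rewriting the magic condition as $f\left(uv\right)=k-\left(f\left(u\right)+f\left(v\right)\right)$ and letting $uv$ range over $E\left(G\right)$, the left-hand side ranges bijectively over $\left[p+1,p+q\right]$; hence $f\left(u\right)+f\left(v\right)$ ranges bijectively over $\left[k-\left(p+q\right),k-\left(p+1\right)\right]$, a block of $q$ consecutive integers. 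This block is precisely the set $S$, and the restriction of $f$ to $V\left(G\right)$ is the desired bijection. Reading off $s=\min S=k-\left(p+q\right)$ yields $k=p+q+s$ and the stated description of $S$.

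\emph{Sufficiency.} Suppose $f:V\left(G\right)\rightarrow\left[1,p\right]$ is a bijection for which $S$ consists of $q$ consecutive integers; write $s=\min S$, so that $S=\left[s,s+q-1\right]$. Set $k=p+q+s$ and define $f\left(uv\right)=k-\left(f\left(u\right)+f\left(v\right)\right)$ for each $uv\in E\left(G\right)$. I would then check three points: (i) distinct edges receive distinct labels, because $\left\vert S\right\vert=q$ forces the map $uv\mapsto f\left(u\right)+f\left(v\right)$ to be injective on the $q$-element set $E\left(G\right)$; (ii) every edge label lies in $\left[p+1,p+q\right]$, since $f\left(u\right)+f\left(v\right)\in\left[s,s+q-1\right]$ gives $f\left(uv\right)\in\left[k-\left(s+q-1\right),k-s\right]=\left[p+1,p+q\right]$; and (iii) consequently the vertex labels $\left[1,p\right]$ and edge labels $\left[p+1,p+q\right]$ together exhaust $\left[1,p+q\right]$. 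Hence the extended $f$ is a bijection onto $\left[1,p+q\right]$ with $f\left(u\right)+f\left(v\right)+f\left(uv\right)=k$ for every edge, i.e.\ a super edge-magic labeling, and $S=\left[k-\left(p+q\right),k-\left(p+1\right)\right]$.

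I do not expect a genuine obstacle here; the entire statement amounts to the observation that ``the edge labels form a consecutive block of length $q$'' and ``the vertex-pair sums form a consecutive block of length $q$'' are the same assertion under the substitution $f\left(uv\right)=k-f\left(u\right)-f\left(v\right)$, together with bookkeeping about which block is complementary to $\left[1,p\right]$. The only step that warrants an explicit sentence is the injectivity claim (i) in the sufficiency proof, and even that is forced purely by a cardinality count.
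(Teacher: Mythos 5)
Your proof is correct and follows the standard argument: rewriting the magic condition as $f\left(uv\right)=k-\left(f\left(u\right)+f\left(v\right)\right)$ and matching the consecutive block of edge labels $\left[\left\vert V\left(G\right)\right\vert+1,\left\vert V\left(G\right)\right\vert+\left\vert E\left(G\right)\right\vert\right]$ with the block of vertex-pair sums, which is exactly the reasoning behind the cited result. Note that the paper itself states this lemma without proof, citing \cite{FIM}; your write-up is essentially the proof given there, including the cardinality count that forces injectivity of the edge sums.
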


Enomoto, Llad\'{o}, Nakamigawa, and Ringel \cite{ELNR} showed that caterpillars are super edge-magic and posed the following conjecture. 

\begin{conjecture}
\label{super_edge-magic_tree_conjecture}
Every nontrivial tree is super edge-magic.
\end{conjecture}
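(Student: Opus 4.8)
The plan is to translate Conjecture \ref{super_edge-magic_tree_conjecture}, via Lemma \ref{trivial}, into a labeling problem about consecutive sums, and then to attack that problem with the machinery of well spread sets. Let $T$ be a tree of order $n$; then $\left\vert E\left(T\right)\right\vert =n-1$, and by Lemma \ref{trivial} it suffices to produce a bijection $f:V\left(T\right)\rightarrow \left[1,n\right]$ whose edge-sum set $S=\left\{f\left(u\right)+f\left(v\right)\mid uv\in E\left(T\right)\right\}$ is an interval of $n-1$ integers. Every edge sum lies in $\left[3,2n-1\right]$, an interval of only $2n-3$ integers, so such an $f$ must be simultaneously a weak Sidon (well spread) labeling — pairwise distinct edge sums — and, far more restrictively, a \emph{packing} of those sums into an interval.

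The first step I would take is to use the bipartition $V\left(T\right)=A\cup B$. The classical observation behind the fact that every tree carrying an $\alpha$-valuation is super edge-magic is that, if $A$ receives the labels $\left[1,\left\vert A\right\vert\right]$ and $B$ the labels $\left[\left\vert A\right\vert+1,n\right]$, then after reflecting the labels on $B$ the edge-sum condition of Lemma \ref{trivial} becomes exactly the requirement that the differences across the edges fill an interval, i.e. a graceful-type condition. Since not every tree admits an $\alpha$-valuation, the new ingredient is to \emph{drop} the requirement that $A$ get the smallest labels: one allows an arbitrary splitting of $\left[1,n\right]$ into a set $X$ of size $\left\vert A\right\vert$ for $A$ and its complement for $B$, and asks for a bijection under which the cross sums $X+\left(\left[1,n\right]\setminus X\right)$ realized along the edges of $T$ form an interval. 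Choosing $X$ and the bijection optimally is a genuine combinatorial optimization problem whose feasible instances are governed by how well spread $X$ and its complement can be made; this is the precise sense in which well spread sets and weak Sidon sets enter.

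For the inductive part I would build on the result of Enomoto, Llad\'{o}, Nakamigawa, and Ringel that caterpillars are super edge-magic: fix a longest path of $T$, label the associated caterpillar (the path together with its direct neighbours) first with a well spread labeling, and then reattach the remaining pendant branches, using the slack between $n-1$ and $2n-3$ to interleave and shift their labels so that the edge-sum set stays an interval. Because the object being controlled is a set of consecutive sums in $\left[1,n\right]$, the same apparatus should apply, with sums taken modulo the size, to harmonious labelings, and hence bear on the conjecture of Graham and Sloane mentioned in the abstract.

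The step I expect to be the main obstacle is guaranteeing the packing condition while the tree is being reassembled. Distinctness of edge sums is robust — it can be repaired greedily — but forcing the sums to occupy a full interval is a rigid global constraint that almost any local surgery on $T$ violates, and no tree decomposition is known to be compatible with it. In other words, the reduction recasts Conjecture \ref{super_edge-magic_tree_conjecture} as an extremal question about weak Sidon sets living in integer intervals of length about $n$, and it is that extremal question — not the original conjecture — that one would then have to settle; the value of the approach is that this recast question is amenable to the tools of combinatorial optimization.
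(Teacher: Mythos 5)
This statement is an open conjecture (due to Enomoto, Llad\'{o}, Nakamigawa, and Ringel), and the paper does not prove it; it only proposes a possible route, namely Conjecture \ref{strength_deficiency_conjecture}, which together with the known fact $\mathrm{str}\left(T\right)=\left\vert V\left(T\right)\right\vert+1$ for nontrivial trees (Theorem \ref{strength}) would imply $\mu_{s}\left(T\right)=0$. Your proposal is therefore being measured against a target that nobody has reached, and, as you yourself concede in your last paragraph, it does not reach it either: what you have written is a reduction plan, not a proof.

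The concrete gap is the reassembly step. Your plan is (i) reformulate via Lemma \ref{trivial} as the requirement that the edge sums form an interval of $n-1$ consecutive integers, (ii) label a spine caterpillar (which is known to be super edge-magic), and (iii) reattach the remaining branches while preserving the interval property, with weak Sidon/well spread sets controlling feasibility. Step (iii) is exactly the hard part and is nowhere established: the interval (``packing'') condition is a rigid global constraint, and you give no lemma guaranteeing that the labels of the reattached vertices can be interleaved without creating a gap or a repeated sum, nor any invariant that an induction could maintain. Note also that in this paper well spread sets are used in the opposite direction --- Lemma \ref{Kotzig} forces the gap $\Gamma$ of the sum set to be large, yielding \emph{nonexistence} results such as Theorem \ref{Kn+1-e}; for trees, which are sparse, that machinery gives no leverage toward existence. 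Dropping the $\alpha$-valuation requirement by allowing an arbitrary label set $X$ for one side of the bipartition is a sensible relaxation, but you then must solve the resulting extremal problem, and you explicitly leave it unsolved. So the proposal identifies a plausible program (different from the paper's strength-based program), but it contains no proof of the conjecture, and the decisive step is missing rather than merely sketched.
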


Lee and Shan \cite{LS} have verified the above conjecture for trees with up to $17$ vertices with a computer. 
Fukuchi and Oshima \cite{FO} have shown that if $T$ is a tree of order $n \geq 2$ such that $T$ has diameter greater than or equal to $n-5$, then $T$ is super edge-magic. 
Various classes of banana trees (see \cite{Gallian} for the definition) that have super edge-magic labelings have been found independently by Swaminathan and Jeyanthi \cite{SJ}, and Hussain, Baskoro, and Slamin \cite{HBS}. 
Fukuchi \cite{Fukuchi} showed how to recursively create super edge-magic trees from certain kinds of existing super edge-magic trees. 
Ngurah, Baskoro, and Simanjuntak \cite{NBS} provided a method for constructing new (super) edge-magic graphs from existing ones. 
For further knowledge on the progress of Conjecture \ref{super_edge-magic_tree_conjecture}, the authors suggest that the reader consult the extensive survey by Gallian \cite{Gallian}.

For every graph $G$, Kotzig and Rosa \cite{KR} proved that there exists an edge-magic graph $H$ such that $H= G \cup nK_{1}$ for some nonnegative integer $n$. 
This motivated them to define the edge-magic deficiency of a graph. 
The \emph{edge-magic deficiency} $\mu \left(G\right)$ of a graph $G$ is the smallest nonnegative integer $n$ for which $G \cup nK_{1}$ is edge-magic. Inspired by Kotzig and Rosa's notion, the concept of \emph{super edge-magic deficiency} $\mu_{s} \left(G\right)$ of a graph $G$ was analogously defined in \cite{FIM2} as either the smallest nonnegative integer $n$ with the property that $G \cup nK_{1}$ is super edge-magic or $+ \infty$ if there exists no such integer $n$.  Thus, the super edge-magic deficiency of a graph $G$ is a measure of how “close” (“ far ”) $G$ is to (from) being super edge-magic. 

An alternative term exists for the super edge-magic deficiency, namely, the vertex dependent characteristic. 
This term was coined by Hedge and Shetty \cite{HS}. 
In \cite{HS}, they gave a construction of polygons having same angles and distinct sides using the result on the super edge-magic deficiency of cycles provided in \cite{FIM3}.

\section{Lower and upper bounds}
It is known from \cite{FIM3} that $\mu_{s} \left(K_{n}\right)=+\infty$ for every integer $n \geq5$. 
It follows that for every integer $n$ with $n \neq 1, 2, 3, 4$, 
there exists a positive integer $l\left(n\right)$ with the property that if $G$ is a graph of order $n$ and size at least $l\left(n\right)$, 
then $\mu_{s} \left(G\right)=+\infty$.
It is interesting to determine the exact value of $l\left(n\right)$. 
However, it seems that this is a very hard problem. 
In this section, we present lower and upper bounds for this value.

We begin with the following lower bound for $l\left(n\right)$. 

\begin{theorem}
\label{lower_bound}
For every integer $n \geq4$,
\begin{equation*}
l\left(n\right) \geq \lceil n/2\rceil \left(\lfloor n/2\rfloor +1\right)+1 \text{.}
\end{equation*}
\end{theorem}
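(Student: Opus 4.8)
The plan is to exhibit, for each $n \ge 4$, a graph $G^{*}$ of order $n$ and size exactly $\lceil n/2\rceil(\lfloor n/2\rfloor+1)$ that is super edge-magic; this certifies $l(n) > \lceil n/2\rceil(\lfloor n/2\rfloor+1)$, hence $l(n) \ge \lceil n/2\rceil(\lfloor n/2\rfloor+1)+1$, since $l(n)$ was defined as the \emph{minimum} size above which \emph{all} graphs of order $n$ have infinite deficiency. By Lemma~\ref{trivial}, it suffices to produce a bijection $f:V(G^{*})\to[1,n]$ whose edge-sum set $S=\{f(u)+f(v): uv\in E(G^{*})\}$ is an interval of $|E(G^{*})|$ consecutive integers.

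The construction I would use is the following. Label the vertices $1,2,\ldots,n$ (so $f$ is the identity) and split them into the "low" block $L=\{1,\ldots,\lceil n/2\rceil\}$ and the "high" block $H=\{\lceil n/2\rceil+1,\ldots,n\}$, with $|L|=\lceil n/2\rceil$ and $|H|=\lfloor n/2\rfloor$. Take $G^{*}$ to be the graph whose edges are: all $\binom{|L|}{2}$ edges inside $L$ (a clique on the low labels), together with a carefully chosen set of edges between $L$ and $H$. The key observation is that the pairwise sums within $L=\{1,\ldots,m\}$ (where $m=\lceil n/2\rceil$) already form the consecutive interval $[3,\,2m-1]$, which has $2m-3$ values, while the clique has $\binom{m}{2}$ edges — so for $m\ge 4$ we need repeated sums, i.e.\ we cannot simply take a clique and must instead choose a subgraph of the clique plus cross edges so that every sum in a target interval is realized exactly the right number of times. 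A cleaner route: let $G^{*}$ consist of a spanning structure on $L$ realizing each sum in $[3,2m-1]$ once (e.g.\ a Hamiltonian-type path/graph on $L$ giving distinct consecutive sums), then append edges from $H$ to $L$ to extend the sum-interval upward. Concretely, edges $\{i, m+j\}$ contribute sums $i+m+j$; by choosing these pairs so their sums sweep out $[2m,\,?]$ consecutively and counting, one arranges the total edge count to hit exactly $\lceil n/2\rceil(\lfloor n/2\rfloor+1)$. I would verify the arithmetic: $\lceil n/2\rceil(\lfloor n/2\rfloor+1) = \lceil n/2\rceil\lfloor n/2\rfloor + \lceil n/2\rceil$, and $\lceil n/2\rceil\lfloor n/2\rfloor = \lfloor n^{2}/4\rfloor$, which is exactly the Turán-type maximum number of edges in a graph whose vertex labels admit a "bipartite-like" consecutive-sum realization — this suggests the extremal $G^{*}$ is essentially the complete bipartite graph $K_{\lceil n/2\rceil,\lfloor n/2\rfloor}$ with $\lceil n/2\rceil$ extra edges. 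Indeed $K_{\lceil n/2\rceil,\lfloor n/2\rfloor}$ with parts $L,H$ has edge-sums ranging over $[1+(m+1),\, m+n]=[m+2,\,m+n]$, an interval of exactly $n-1=|L|+|H|-1$... no: it has $mn/4$-ish edges but far fewer distinct sums, so again multiplicities must be controlled. The correct extremal example, which I would pin down, is $K_{\lceil n/2\rceil}$ plus a matching-like attachment — giving precisely $\binom{\lceil n/2\rceil}{2}$ is too small, so the honest construction is: the complete bipartite graph between $L'=\{1,\ldots,\lfloor n/2\rfloor\}$ and its complement, suitably augmented. I would settle on whichever of these yields size $\lceil n/2\rceil(\lfloor n/2\rfloor+1)$ and check the consecutive-sum condition directly.

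The key steps, in order, are: (i) fix the labeling $f=\mathrm{id}$ on $[1,n]$ and identify which multiset of edge-sums is forced; (ii) choose the edge set of $G^{*}$ as a clique on one block plus cross-block edges chosen so the sum-multiset is exactly $\{s, s+1,\ldots, s+M-1\}$ for $M=\lceil n/2\rceil(\lfloor n/2\rfloor+1)$ and some starting value $s$; (iii) confirm $|E(G^{*})| = M = \lceil n/2\rceil(\lfloor n/2\rfloor+1)$ by a short count, handling the parity cases $n$ even and $n$ odd separately; (iv) invoke Lemma~\ref{trivial} to conclude $G^{*}$ is super edge-magic, so $\mu_{s}(G^{*}) = 0 \ne +\infty$; (v) conclude $l(n) \ge M+1$ from the definition of $l(n)$.

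The main obstacle I anticipate is step (ii): arranging the cross-block edges so that the sum-multiset is a \emph{contiguous} block with the \emph{correct multiplicities} and no gaps — this is a small but fiddly combinatorial design, and the two parity cases will need to be written out separately (when $n$ is even both blocks have size $n/2$; when $n$ is odd the low block is one larger, which shifts the achievable interval). A secondary point worth care: one must check that $G^{*}$ is genuinely a \emph{simple} graph (no multi-edges) even though sums repeat — repeated sums come from distinct vertex pairs, so this is fine, but it should be stated. Once the construction is fixed, steps (iii)–(v) are immediate.
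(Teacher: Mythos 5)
There is a genuine gap, and it is fatal to the plan as stated: you aim to produce a graph $G^{*}$ of order $n$ and size $M=\lceil n/2\rceil\left(\lfloor n/2\rfloor+1\right)$ that is \emph{super edge-magic}, i.e.\ to find a bijection $f:V\left(G^{*}\right)\rightarrow\left[1,n\right]$ whose edge sums form a set of $M$ consecutive integers. But with labels in $\left[1,n\right]$ all sums lie in $\left[3,2n-1\right]$, and Lemma~\ref{trivial} requires the sum \emph{set} to have exactly $\left\vert E\left(G^{*}\right)\right\vert$ elements, so the sums must be pairwise distinct; hence any super edge-magic graph of order $n$ has at most $2n-3$ edges. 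Already for $n=4$ one needs $M=6>5=2n-3$, and the deficit grows quadratically. Your mid-proof suggestion to allow ``repeated sums realized the right number of times'' does not repair this: repeated sums are exactly what Lemma~\ref{trivial} forbids, since each sum determines a distinct edge label. So no graph with the required size can have $\mu_{s}=0$, and steps (i)--(iv) cannot be carried out.

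The missing idea is that the definition of $l\left(n\right)$ only requires a graph of order $n$ and size $M$ with $\mu_{s}\left(G\right)<+\infty$, not $\mu_{s}\left(G\right)=0$. That freedom is what the paper exploits: one labels the $n$ vertices \emph{injectively} into a larger interval (equivalently, one labels $G\cup kK_{1}$ and parks the unused labels on isolated vertices) so that the edge sums are distinct and consecutive. Concretely, the paper takes blocks $\left\{x_{1},\ldots,x_{\lceil n/2\rceil}\right\}$ and $\left\{y_{1},\ldots,y_{\lfloor n/2\rfloor}\right\}$, with all cross edges $x_{i}y_{j}$, the star $x_{1}x_{i}$ ($i\geq2$), and the single edge $y_{1}y_{\lfloor n/2\rfloor}$, and labels $f\left(x_{i}\right)=i$, $f\left(y_{j}\right)=\lceil n/2\rceil j+1$; the sums then sweep out the interval $\left[3,\lceil n/2\rceil\left(\lfloor n/2\rfloor+1\right)+2\right]$ with no repetitions, giving $\mu_{s}\left(G\right)<+\infty$ and hence $l\left(n\right)\geq M+1$. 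Your step (v) logic is fine once the witness is of this weaker kind; to salvage your write-up you should drop the bijection-onto-$\left[1,n\right]$ requirement, replace ``super edge-magic'' by ``finite super edge-magic deficiency,'' and give an explicit injective labeling of the above sort rather than the unfinished clique-plus-cross-edges designs you sketch.
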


\begin{proof}
Define the graph $G$ with 
\begin{equation*}
V\left(G\right) = \left\{x_i \left| \right. i\in \left[1, \lceil n/2\rceil \right] \right\} \cup \left\{y_i \left| \right. i\in \left[1, \lfloor n/2\rfloor \right] \right\}
\end{equation*}
and

\begin{eqnarray*}
  E\left(G\right) & = & \left\{x_{i}x_{j} \left|i \right. \in \left[1, \lceil n/2\rceil\right] \text{ and } j \in \left[1, \lfloor n/2\rfloor\right]\right\} \\
  & & \cup \left\{x_{1}x_{i} \left| \right. i \in\left[2, \lceil n/2\rceil\right]\right\} \cup \left\{y_1y_{\lfloor n/2\rfloor}\right\} \text{.}
\end{eqnarray*}

%\begin{equation*}
% E\left(G\right) = \left\{x_{i}x_{j} \left|i \right. \in \left[1, \lceil n/2\rceil\right] \text{ and } j \in \left[1, \lfloor n/2\rfloor\right]\right\} \cup 
%\left\{x_{1}x_{i} \left| \right. i \in\left[2, \lceil n/2\rceil\right]\right\} \cup \left\{y_1y_{\lfloor n/2\rfloor}\right\} \text{.}
%\end{equation*}

Now, consider the vertex labeling 
$f:V\left(G\right)\rightarrow \left[1, \lceil n/2\rceil \lfloor n/2\rfloor +1\right]$
such that
\begin{equation*}
f\left(v\right)=\left\{ 
\begin{tabular}{ll}
$i $ & if $v=x_{i}$ and $i\in \left[1, \lceil n/2 \rceil\right]$\\ 
$\lceil n/2 \rceil i+1$ & if $v=y_{i}$ and $i\in \left[1, \lfloor n/2 \rfloor\right]$
\end{tabular}%
\right.
\end{equation*}
Then
\begin{equation*}
\begin{tabular}{ll}
  $\left\{ f\left(x_{1}\right)+ f\left(x_{i}\right) \left| \right. i \in \left[2, \lceil n/2 \rceil\right\}=\left[3, \lceil n/2 \rceil+1\right] \right. \text{,}$ \\
  $\left\{ f\left(x_{i}\right)+ f\left(y_{j}\right) \left| \right. i \in \left[1, \lceil n/2\rceil\right] \right.  \text{ and } j \in \left[1, \lfloor n/2 \rfloor  \right] = \left[\lceil n/2 \rceil+2, \lceil n/2 \rceil \left(\lfloor n/2 \rfloor +1 \right)+1\right] \text{,}$ \\
  $\left\{ f\left(y_{1}\right)+ f\left(y_{\lfloor n/2 \rfloor}\right) \right\}=\left\{\lceil n/2 \rceil\left(\lfloor n/2 \rfloor+1\right)+2 \right\}$\text{.}
\end{tabular}
\end{equation*}
Since $\left\vert E\left( G\right) \right\vert=\lceil n/2 \rceil \left(\lfloor n/2 \rfloor+1\right)$, it follows that the set

\begin{equation*}
S=\left\{f\left(x\right)+f\left(y\right) \left| \right. xy \in E\left(G\right)\right\}
\end{equation*}
is a set of $\left\vert E\left( G\right) \right\vert$ consective integers.
This shows that $\mu_{s}\left(G\right)<+\infty$.
Hence, there exists a graph $G$ of order $n$ and size $\lceil n/2\rceil \left(\lfloor n/2\rfloor +1\right)$ so that $\mu_{s}\left(G\right)<+\infty$.
Therefore, $l\left(n\right) \geq \lceil n/2\rceil \left(\lfloor n/2\rfloor +1\right)+1$.
\end{proof}

For a finite set $S$ of integers, we define the \emph{gap} $\Gamma \left(S\right)$ of $S$ to be

\begin{equation*}
\Gamma \left(S\right)=\left(\max(S)-\min(S)+1\right)-\left\vert S \right\vert \text{.}
\end{equation*}
Then the following fact is a consequence of the above defition.

\begin{observation}
\label{obervation_gap} 
Let $S$ be a finite set of integers. 
Then $S$ is a set of consecutive integers if and only if $\Gamma\left(S\right)=0$.
\end{observation}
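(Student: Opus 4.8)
The plan is to prove both directions of the equivalence directly from the definition of $\Gamma$, relying only on the elementary fact that for a nonempty finite set $S$ of integers one has $S \subseteq \left[\min\left(S\right), \max\left(S\right)\right]$, and that this interval of integers contains exactly $\max\left(S\right) - \min\left(S\right) + 1$ elements. Throughout I assume $S \neq \emptyset$, as is implicit in the definition of $\Gamma\left(S\right)$, which refers to $\min\left(S\right)$ and $\max\left(S\right)$.

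For the forward implication, I would observe that if $S$ is a set of consecutive integers, then $S = \left[\min\left(S\right), \max\left(S\right)\right]$, so that $\left\vert S\right\vert = \max\left(S\right) - \min\left(S\right) + 1$; substituting this into the definition of $\Gamma$ yields $\Gamma\left(S\right) = 0$ immediately. For the converse, assume $\Gamma\left(S\right) = 0$, that is, $\left\vert S\right\vert = \max\left(S\right) - \min\left(S\right) + 1$. Since $S$ is contained in $\left[\min\left(S\right), \max\left(S\right)\right]$ and this interval has exactly $\max\left(S\right) - \min\left(S\right) + 1 = \left\vert S\right\vert$ elements, a finite set contained in another set of the same cardinality must coincide with it; hence $S = \left[\min\left(S\right), \max\left(S\right)\right]$, which is a set of consecutive integers.

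There is no genuine obstacle here: the statement is a direct consequence of counting the elements of an integer interval. The only point meriting a remark is the degenerate case $\left\vert S\right\vert = 1$, in which $\min\left(S\right) = \max\left(S\right)$ and both sides of the equivalence hold trivially, a singleton being (vacuously) a set of consecutive integers.
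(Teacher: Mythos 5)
Your proof is correct and amounts to spelling out the counting argument that the paper treats as immediate: the paper states this observation without proof, as a direct consequence of the definition of $\Gamma\left(S\right)$, and your two directions (consecutive implies $\left\vert S\right\vert = \max\left(S\right)-\min\left(S\right)+1$, and equality of cardinalities forcing $S=\left[\min\left(S\right),\max\left(S\right)\right]$) are exactly the intended justification. Nothing is missing, and your remark on the singleton case is a harmless extra.
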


To study graphs for which the clique number $\omega\left(G\right)$ of a graph $G$ (the largest order among the complete subgraph of $G$) is large in relation to the size of the graph, we have resorted to the theory of well spread sets introduced by Kotzig \cite{ Kotzig}.
A set $\left\{x_{i} \left| \right. i\in \left[1, n\right] \right\} \subset \mathbb{N}$ with $x_{1}<x_{2}<\cdots<x_{n}$ is a
%\emph{well spread set} or a \emph{weak Sidon set} as referred by Ruzsa \cite{ Ruzsa} (WS-set for short) if
\emph{well spread set} or \emph{a weak Sidon set} (WS-set for short)  by Ruzsa \cite{Ruzsa} if
the sums $x_{i}+x_{j}$ ($i<j$) are all different. 
Furthermore, we define the \emph{smallest span of pairwise sums} $\rho^{*} \left(n\right)$ of cardinality of $n$ to be 
\begin{equation*}
\rho^{*} \left(n\right)=\min\left\{x_{n}+x_{n-1}-x_{2}-x_{1}+1 \left| \right. \{x_{1}<x_{2}<\cdots<x_{n} \} \text{ is WS-set} \right\} \text{.} 
\end{equation*}

The following lemma found by Kotzig \cite{Kotzig} provides a lower bound of $\rho^{*} \left(n\right)$ for every integer $n \geq7$.

\begin{lemma}
\label{Kotzig}
For every integer $n \geq7$, 
\begin{equation*}
\rho^{*} \left(n\right) \geq n^{2}-5n+14 \text{.}
\end{equation*}
\end{lemma}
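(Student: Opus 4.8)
The plan is to translate the statement into one about pairwise \emph{differences}. It suffices to prove that every WS-set $\left\{x_{1}<x_{2}<\cdots <x_{n}\right\}$ satisfies $x_{n}+x_{n-1}-x_{2}-x_{1}\geq n^{2}-5n+13$; since
\begin{equation*}
x_{n}+x_{n-1}-x_{2}-x_{1}=\left(x_{n}-x_{1}\right)+\left(x_{n-1}-x_{2}\right)
\end{equation*}
and since every subset of a WS-set is again a WS-set (in particular $\left\{x_{2},x_{3},\ldots ,x_{n-1}\right\}$ is), the task reduces to bounding from below the diameter $x_{m}-x_{1}$ of a WS-set $\left\{x_{1}<\cdots <x_{m}\right\}$ of prescribed order $m$, and then adding the two contributions (with $m=n$ and with $m=n-2$).

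The first step is a structural fact: in a WS-set $\left\{x_{1}<\cdots <x_{m}\right\}$, among the $\binom{m}{2}$ differences $x_{j}-x_{i}$ with $i<j$, every value occurs \emph{at most twice}, and it occurs twice exactly when it is the common difference of a three-term arithmetic progression contained in the set --- a third occurrence, or two progressions with the same common difference, would force a four-term progression $a,a+d,a+2d,a+3d$ in the set, and then $a+\left(a+3d\right)=\left(a+d\right)+\left(a+2d\right)$ violates the WS-property. Since the WS-property permits at most one pair to sum to twice a prescribed element, each element is the middle term of at most one such progression, so the set contains at most $m-2$ of them; hence the number of distinct differences is at least $\binom{m}{2}-\left(m-2\right)$, and therefore $x_{m}-x_{1}\geq \binom{m}{2}-\left(m-2\right)$. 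One also records that every doubled difference, being the common difference of a progression whose span is at most $x_{m}-x_{1}$, lies in $\left[1,\left(x_{m}-x_{1}\right)/2\right]$. Applying the diameter bound to $\left\{x_{1},\ldots ,x_{n}\right\}$ and to $\left\{x_{2},\ldots ,x_{n-1}\right\}$ and adding gives
\begin{equation*}
x_{n}+x_{n-1}-x_{2}-x_{1}\geq \binom{n}{2}+\binom{n-2}{2}-\left(n-2\right)-\left(n-4\right)=n^{2}-5n+9\text{,}
\end{equation*}
which still misses the target by four.

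To close the gap I would argue by a dichotomy on the number $t$ of three-term progressions in $\left\{x_{1},\ldots ,x_{n}\right\}$ (and the corresponding number $t^{\prime}$ for $\left\{x_{2},\ldots ,x_{n-1}\right\}$). If $t+t^{\prime}$ is below a suitable threshold, the two refined diameter bounds $x_{n}-x_{1}\geq \binom{n}{2}-t$ and $x_{n-1}-x_{2}\geq \binom{n-2}{2}-t^{\prime}$ add up to at least $n^{2}-5n+13$ directly. If $t+t^{\prime}$ is large, one exploits structure instead: the common differences of the $t$ progressions are pairwise distinct, each is at most $\left(x_{n}-x_{1}\right)/2$, each of their doubles is again a difference of the set, and chasing the relations $x_{i}-x_{a_{i}}=x_{b_{i}}-x_{i}$ (with $a_{i}<i<b_{i}$) for the middle terms forces the consecutive gaps of the set to grow at a fast, Fibonacci-type rate, so that $x_{n}-x_{1}$, and hence $x_{n}+x_{n-1}-x_{2}-x_{1}$, eventually exceeds any fixed quadratic polynomial in $n$; this settles all sufficiently large $n$.

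The main obstacle is exactly this second regime: one has to quantify precisely how a large supply of three-term progressions --- equivalently, many repeated differences, all crowded into the lower half $\left[1,\left(x_{n}-x_{1}\right)/2\right]$ --- forces the set to spread out, and then choose the threshold between the two regimes so that both deliver at least $n^{2}-5n+13$ simultaneously. It is this calibration that fixes the constant and, in particular, forces the hypothesis $n\geq 7$: for $n\leq 6$ the progression-rich WS-sets such as $\left\{0,1,2,4\right\}$, $\left\{0,1,2,4,7\right\}$, and $\left\{0,1,2,4,7,12\right\}$ already have $x_{n}+x_{n-1}-x_{2}-x_{1}$ strictly below $n^{2}-5n+13$, so the inequality genuinely fails there. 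A secondary, purely clerical, difficulty is keeping track of which differences and which progressions meet the extreme elements $x_{1},x_{2},x_{n-1},x_{n}$ when information is transferred among the sets $\left\{x_{1},\ldots ,x_{n}\right\}$, $\left\{x_{2},\ldots ,x_{n-1}\right\}$, and $\left\{x_{2},\ldots ,x_{n}\right\}$.
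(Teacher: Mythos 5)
The paper itself offers no proof of this lemma: it is quoted from Kotzig's report on well spread sets, so your attempt has to be judged on its own merits. Its first half is sound. In a WS-set a difference can repeat only via a three-term arithmetic progression (otherwise two disjoint pairs would have equal sums), a third occurrence would create a four-term progression $c,c+d,c+2d,c+3d$ and hence the forbidden equality $c+(c+3d)=(c+d)+(c+2d)$, and distinct repeated differences have distinct middle terms, so at most $m-2$ values repeat. This gives $x_m-x_1\geq \binom{m}{2}-(m-2)$, and applying it to $\left\{x_1,\ldots ,x_n\right\}$ and $\left\{x_2,\ldots ,x_{n-1}\right\}$ yields exactly what you computed: $x_n+x_{n-1}-x_2-x_1\geq n^2-5n+9$, i.e. $\rho^{*}(n)\geq n^2-5n+10$, which is four short of the claimed $n^2-5n+14$.

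The genuine gap is the second regime of your dichotomy, which is where all of the remaining content lives. To reach $n^2-5n+13$ by your refined bounds you must dispose of the cases $t+t'\geq 2n-9$, i.e. when almost every element of both sets is the middle of a progression, and your only argument there is the assertion that the relations $2x_i=x_{a_i}+x_{b_i}$ force the consecutive gaps to grow at a ``Fibonacci-type rate'' so that $x_n-x_1$ eventually beats any quadratic. No inequality is actually derived from those relations, no threshold between the two regimes is fixed, and you yourself identify this calibration as the unresolved obstacle --- so the plan is a heuristic, not a proof. Moreover, even if the growth claim were established, a conclusion valid only for ``sufficiently large $n$'' does not cover the statement's full range $n\geq 7$; you would still need explicit constants and a check of the intermediate values. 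As written, the proposal proves $\rho^{*}(n)\geq n^2-5n+10$ only, and the extra $+4$ --- precisely what separates Kotzig's bound from the routine difference count --- is missing.
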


With the aid of Lemma \ref{Kotzig}, it is possible to present the following result.
\begin{theorem}
\label{Kn+1-e}
For every integer $n \geq 7$, 
\begin{equation*}
\mu_{s} \left(K_{n+1}-e\right)=+\infty \text{,}
\end{equation*}
where $e \in E\left(K_{n+1}\right)$.
\end{theorem}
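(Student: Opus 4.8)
The plan is to argue by contradiction, combining Lemma~\ref{trivial} with Kotzig's bound from Lemma~\ref{Kotzig}. Suppose $\mu_{s}\left(K_{n+1}-e\right)<+\infty$, so that $G=\left(K_{n+1}-e\right)\cup mK_{1}$ is super edge-magic for some nonnegative integer $m$. By Lemma~\ref{trivial} there is a bijection $f:V\left(G\right)\rightarrow\left[1,\left\vert V\left(G\right)\right\vert\right]$ for which $S=\left\{f\left(x\right)+f\left(y\right)\mid xy\in E\left(G\right)\right\}$ is a set of $\left\vert E\left(G\right)\right\vert=\binom{n+1}{2}-1$ consecutive integers. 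In particular the map $xy\mapsto f\left(x\right)+f\left(y\right)$ from $E\left(G\right)$ onto $S$ is a bijection, so the edge sums are pairwise distinct.

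Next I would extract a weak Sidon set. Let $A=\left\{a_{1}<a_{2}<\cdots<a_{n+1}\right\}\subset\mathbb{N}$ be the set of labels assigned by $f$ to the $n+1$ vertices of $K_{n+1}-e$ (the isolated vertices contribute no edges). Writing $e=uv$, delete from $A$ the label $f\left(u\right)$ and put $A'=A\setminus\left\{f\left(u\right)\right\}=\left\{a'_{1}<a'_{2}<\cdots<a'_{n}\right\}$. Every unordered pair contained in $A'$ is the label pair of some edge of $K_{n+1}-e$, since the only non-edge pair is $\left\{f\left(u\right),f\left(v\right)\right\}$ and $f\left(u\right)\notin A'$; hence all pairwise sums of $A'$ occur among the edge sums of $G$ and are therefore distinct. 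Thus $A'$ is a WS-set of cardinality $n$.

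The crux is then a span comparison. On one hand, $a'_{1}+a'_{2}$ and $a'_{n-1}+a'_{n}$ are the least and greatest pairwise sums of $A'$, and both lie in $S$, so $\left(a'_{n-1}+a'_{n}\right)-\left(a'_{1}+a'_{2}\right)\leq\left\vert S\right\vert-1=\binom{n+1}{2}-2$. On the other hand, since $A'$ is a WS-set of cardinality $n\geq 7$, the definition of $\rho^{*}$ together with Lemma~\ref{Kotzig} yields $\left(a'_{n-1}+a'_{n}\right)-\left(a'_{1}+a'_{2}\right)\geq\rho^{*}\left(n\right)-1\geq n^{2}-5n+13$. Combining the two bounds gives $n^{2}-5n+13\leq\binom{n+1}{2}-2$, which simplifies to $n^{2}-11n+30\leq 0$, that is, $\left(n-5\right)\left(n-6\right)\leq 0$; this forces $n\leq 6$, contradicting $n\geq 7$. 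Hence no such $m$ exists and $\mu_{s}\left(K_{n+1}-e\right)=+\infty$.

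I expect the main obstacle to be the middle step: recognising that deleting exactly one endpoint of the missing edge turns the vertex-label set into a genuine weak Sidon set (so that Kotzig's estimate is available), and then observing that the two extreme pairwise sums of that $n$-element set are forced to lie inside the interval $S$ of length $\binom{n+1}{2}-1$. After that the concluding inequality is a routine calculation, and the factorisation $\left(n-5\right)\left(n-6\right)\leq 0$ makes transparent why the hypothesis $n\geq 7$ is exactly what is needed — the borderline cases $n=5,6$ escape this line of argument.
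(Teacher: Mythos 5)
Your proposal is correct and follows essentially the same route as the paper: delete one endpoint of the missing edge so the remaining $n$ vertex labels form a weak Sidon set, apply Kotzig's bound (Lemma~\ref{Kotzig}), and play it off against the fact that $S$ is an interval of $\binom{n+1}{2}-1$ consecutive integers, ending at the same inequality $(n-5)(n-6)\leq 0$. The only difference is cosmetic: you compare the extreme pairwise sums directly with the span of $S$, whereas the paper phrases the same comparison through the gap function $\Gamma$ applied to the sums not incident with the deleted vertex.
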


\begin{proof}
Assume, to the contrary, that $\mu_{s} \left(K_{n+1}-e\right)=k$ for some positive integer $k$.
Then there exists a bijective function $f:V\left(\left(K_{n+1}-e\right) \cup kK_{1}\right) \rightarrow \left[1, n+1+k \right]$  such that the set

\begin{equation*}
S=\left\{ f\left(x\right)+f\left(y\right) \left|xy \in E\left(\left(K_{n+1}-e\right) \cup kK_{1}\right\} \right. \right.
\end{equation*}
is a set of $\tbinom{n+1}{2}-1$ consective integers, that is, 
$\Gamma\left(S\right)=0$ by Obsrvation \ref{obervation_gap}.

Now, assume that $u, v \in V\left(K_{n+1}-e\right)$ but $uv \notin E\left(K_{n+1}-e\right)$.
Also, consider the subgraph of $K_{n+1}-e$ obtained by elminating vertex $u$ so that
the resulting subgraph is $K_{n}$ and consider the set 

\begin{equation*}
S^{\prime}=\left\{ f\left(x\right)+f\left(y\right) \left|xy \in E\left(\left(K_{n+1}-e\right) \cup kK_{1}\right)\backslash \left\{u\right\}\right\} \text{.} \right. 
\end{equation*}
Since the sums considered in $S^{\prime}$ are the same sums as the sums considered in $S$ but for $\left(n-1 \right)$ sums (the ones corresponding to edges incident with $u$), it follows that $\Gamma \left(S^{\prime}\right) \leq n-1$.
On the other hand, the set 

\begin{equation*}
\Omega=\left\{ f\left(x\right) \left|x \in V\left(\left(K_{n+1}-e\right)\backslash \left\{u\right\}\right\} \right. \right.
\end{equation*}
is a well spread set of cardinality $n$. 
It follows from Lemma \ref{Kotzig} that
\begin{equation*}
\max(W) -\min(W)+1 \geq n^{2}-5n+14 \text{,}
\end{equation*}
where $W=\left\{ f\left(x\right)+ f\left(y\right) \left|f\left(x\right), f\left(y\right) \in \Omega \text{ and } f\left(x\right) \neq f\left(y\right)\right\} \right.$.
This implies that 
\begin{equation*}
\Gamma\left(S^{\prime}\right) \geq n^{2}-5n+14-\dbinom{n}{2}=\left(n^{2}-9n+28\right)/2 \text{.}
\end{equation*}
Therefore, 
\begin{equation*}
\left(n^{2}-9n+28\right)/2 \leq \Gamma\left(S^{\prime}\right) \leq n-1
\end{equation*}
for all integers $n \geq7$.
However, since $\left(n^{2}-9n+28\right)/2>n-1$ for all integers $n \geq7$, 
it follows that $\Gamma\left(S^{\prime}\right)>n-1$, producing a contradiction.
\end{proof}

In fact, for any integer $n \geq 8$, the preceding result provides us with an upper bound on $l\left(n\right)$, 
since for these values of $n$, we know that $\mu_{s}\left(K_{n}\right)= \mu_{s}\left(K_{n}-e\right)=+\infty$, 
where $e \in E\left(K_{n}\right)$. 
Therefore, we have the following upper bound for $l\left(n\right)$.

\begin{corollary}
\label{upper_bound}
For every integer $n \geq 7$,
\begin{equation*}
l\left(n\right) \leq \dbinom{n}{2}-2 \text{.}
\end{equation*}
\end{corollary}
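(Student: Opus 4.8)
The plan is to reduce the inequality to a short finite check. By the way $l\left(n\right)$ is used, $l\left(n\right)\le m$ holds precisely when every graph of order $n$ with at least $m$ edges has $\mu_{s}=+\infty$; thus it suffices to show that every graph of order $n$ with at least $\binom{n}{2}-2$ edges has infinite super edge-magic deficiency. A graph of order $n$ has at least $\binom{n}{2}-2$ edges exactly when its complement has at most two edges, and a graph with at most two edges is, up to isolated vertices, empty, a single edge, a path of two edges, or a matching of two edges; hence for $n\ge 5$ there are precisely four graphs to treat: $K_{n}$, $K_{n}-e$, the graph $A_{n}$ obtained from $K_{n}$ by deleting two edges with a common vertex, and the graph $B_{n}$ obtained from $K_{n}$ by deleting two edges with no common vertex.

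Two of these are already in hand: $\mu_{s}\left(K_{n}\right)=+\infty$ for $n\ge 5$ is the result of \cite{FIM3} recalled above, and $\mu_{s}\left(K_{n}-e\right)=+\infty$ for $n\ge 8$ follows from Theorem~\ref{Kn+1-e} by replacing $n$ with $n-1$. For $A_{n}$ and $B_{n}$ I would rerun the proof of Theorem~\ref{Kn+1-e}. Assume $\mu_{s}=k<+\infty$ and fix the bijection $f$ furnished there, so that the sum set $S$ consists of $\binom{n}{2}-2$ consecutive integers, i.e. $\Gamma\left(S\right)=0$. For $A_{n}$, with deleted edges $ab$ and $bc$, delete the vertex $b$: the remaining $n-1$ vertices induce $K_{n-1}$, their $f$-values form a well spread set of cardinality $n-1$, and only $\deg\left(b\right)=n-3$ sums disappear, so the resulting set $S'$ satisfies $\Gamma\left(S'\right)\le n-3$; on the other hand Lemma~\ref{Kotzig} gives $\Gamma\left(S'\right)\ge\left(\left(n-1\right)^{2}-9\left(n-1\right)+28\right)/2$, and since the quadratic $m^{2}-11m+32$ has no real root these bounds cannot both hold once $n\ge 8$. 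For $B_{n}$, with deleted edges $u_{1}v_{1}$ and $u_{2}v_{2}$, delete $u_{1}$ and $u_{2}$: the remaining $n-2$ vertices induce $K_{n-2}$, their $f$-values form a well spread set of cardinality $n-2$, and the number of vanishing sums equals the number of edges meeting $\left\{u_{1},u_{2}\right\}$, namely $2\left(n-2\right)-1=2n-5$, so $\Gamma\left(S'\right)\le 2n-5$; comparing this with the bound $\Gamma\left(S'\right)\ge\left(\left(n-2\right)^{2}-9\left(n-2\right)+28\right)/2$ coming from Lemma~\ref{Kotzig} yields a contradiction as soon as $n\ge 13$.

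The real difficulty is the small-order window this estimate leaves open. Since $B_{n}$ forces one to expose only a $K_{n-2}$ while discarding as many as $2n-5$ sums, the Kotzig bound closes the argument only for $n\ge 13$; moreover $n=7$ escapes the estimate also for $K_{n}-e$ and for $A_{n}$, and at $n=7$ one is reduced to well spread sets of cardinality $5$ or $6$, to which Lemma~\ref{Kotzig} does not apply. So the cases $7\le n\le 12$ must be handled separately -- either by invoking sharper, cardinality-specific lower bounds for $\rho^{*}$, or, since the vertex set is then bounded, by checking directly through Lemma~\ref{trivial} that no labeling of the few vertices of $K_{n}-e$, $A_{n}$, or $B_{n}$ can make its pairwise edge-sums consecutive. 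It is this bounded but fiddly case analysis, rather than the asymptotic argument above, that I expect to be the crux.
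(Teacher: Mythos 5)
Your reading of the statement is the literal one: $l\left(n\right)\leq\binom{n}{2}-2$ requires that \emph{every} graph of order $n$ with at least $\binom{n}{2}-2$ edges, hence also both graphs in $K_{n}-2e$, have infinite super edge-magic deficiency, and your treatment of $A_{n}$ and $B_{n}$ is sound as far as it goes: the gap bounds $n-3$ and $2n-5$, the use of Lemma~\ref{Kotzig} after deleting one (respectively two) vertices, and the resulting thresholds $n\geq 8$ for $A_{n}$ and $n\geq 13$ for $B_{n}$ all check out. But, as you concede, this leaves $B_{n}$ unresolved for $7\leq n\leq 12$ and everything except $K_{n}$ unresolved at $n=7$, where the deleted-vertex trick produces well spread sets of cardinality $5$ or $6$, outside the range of Lemma~\ref{Kotzig}; you offer no actual argument for these cases, only the remark that they would have to be checked. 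So, taken as a proof of the corollary exactly as stated (all $n\geq 7$), the proposal is incomplete.

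It is worth noting, however, that the paper's own justification is both different and weaker than what you attempt. The corollary is derived in one line from $\mu_{s}\left(K_{n}\right)=+\infty$ for $n\geq 5$ (quoted from \cite{FIM3}) together with Theorem~\ref{Kn+1-e} with the index shifted, i.e. $\mu_{s}\left(K_{n}-e\right)=+\infty$ for $n\geq 8$; the graphs in $K_{n}-2e$ are not considered there at all, and are only disposed of later, via the $\alpha=2$ case of the general theorem, for $n\geq 21$. Read literally, the printed argument supports only $l\left(n\right)\leq\binom{n}{2}-1$, and only for $n\geq 8$, so the difficulty you isolated --- the need to handle two deleted edges, and the small orders where Kotzig's bound is unavailable --- is really an imprecision in the paper rather than an oversight on your part. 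Your explicit analysis of $A_{n}$ and $B_{n}$ goes beyond the printed proof and is exactly what would be needed to justify the stated bound, except on the finite window $7\leq n\leq 12$ (and $n=7$ for $K_{n}-e$ and $A_{n}$), which neither you nor the paper settles.
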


From now on, let $K_{n}-\alpha e$ denote the set of all graphs obtained from $K_{n}$ by removing exactly $\alpha$ edges,
where $\alpha$ is a positive integer. 
Our next theorem generalizes the preceding result.

\begin{theorem}
%\label{Kn+1-e}
For a fixed positive integer $\alpha$, 
there exists some positive integer $j\left(\alpha\right)$ such that if $n>j\left(\alpha\right)$, 
then $\mu_{s}\left(G\right)=+\infty$ for all $G \in K_{n}-\alpha e$, 
where $n>2\alpha$.
\end{theorem}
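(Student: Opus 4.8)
The plan is to mimic the argument of Theorem~\ref{Kn+1-e}, but to be more careful about how many sums are destroyed when we delete a vertex, since now up to $\alpha$ edges are missing from $K_n$ and the destroyed vertex may be incident to several of them. Suppose, for contradiction, that for infinitely many $n>2\alpha$ there is a graph $G\in K_n-\alpha e$ with $\mu_s(G)=k<+\infty$. Then, as in the proof of Theorem~\ref{Kn+1-e}, there is an injection $f:V(G)\rightarrow[1,n+k]$ such that the set $S$ of edge-sums $f(x)+f(y)$, $xy\in E(G)$, consists of $|E(G)|=\binom{n}{2}-\alpha$ consecutive integers, so $\Gamma(S)=0$.

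First I would choose a vertex $u$ to delete. Among the $n$ vertices of $G$, the $\alpha$ missing edges touch at most $2\alpha$ vertices, so since $n>2\alpha$ there is a vertex $u$ incident with none of them; better still, I would pick $u$ so that the induced subgraph on $V(G)\setminus\{u\}$ still contains a large clique. Concretely, deleting $u$ removes exactly $\deg_G(u)\le n-1$ edges, hence the resulting set $S'$ of sums satisfies $\Gamma(S')\le n-1$ (each deleted edge can increase the gap by at most one, exactly as in Theorem~\ref{Kn+1-e}). On the other hand, $V(G)\setminus\{u\}$ still spans a copy of $K_n-\alpha e$ on $n-1$ vertices, which for $n-1$ large contains a clique on, say, $n-1-\alpha$ vertices (delete one endpoint of each still-present missing edge). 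The labels of those clique vertices form a well spread set $\Omega$ of cardinality $m:=n-1-\alpha$, so by Lemma~\ref{Kotzig}, once $m\ge 7$, the span of its pairwise sums is at least $m^2-5m+14$, while it contributes only $\binom{m}{2}$ sums to $S'$; therefore
\begin{equation*}
\Gamma(S')\ \ge\ (m^2-5m+14)-\binom{m}{2}\ =\ \frac{m^2-9m+28}{2}.
\end{equation*}

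Combining the two estimates gives $\tfrac{m^2-9m+28}{2}\le \Gamma(S')\le n-1$ with $m=n-1-\alpha$, i.e. a quadratic-in-$n$ lower bound against a linear-in-$n$ upper bound. Since $\alpha$ is fixed, the left-hand side still grows like $n^2/2$, so there is a threshold $j(\alpha)$ beyond which the inequality fails, and taking $n>j(\alpha)$ yields the desired contradiction; this $j(\alpha)$ can be read off explicitly by solving $\tfrac{(n-1-\alpha)^2-9(n-1-\alpha)+28}{2}>n-1$. The main technical obstacle is the vertex-selection step: I must ensure simultaneously that the deleted vertex $u$ does not cost more than $n-1$ sums (automatic, since $\deg_G(u)\le n-1$) \emph{and} that what remains still contains a clique large enough for Lemma~\ref{Kotzig} to bite, i.e. of order at least $\max(7,\ \text{something} )$; handling the interaction between the $\alpha$ missing edges and this clique extraction — and checking the edge cases $n\le 2\alpha$ or $m<7$ are excluded by the hypothesis $n>j(\alpha)$ — is where the care is needed, but it is routine once $j(\alpha)$ is chosen large enough.
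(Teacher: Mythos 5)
Your overall strategy is the same as the paper's (expose a clique inside the graph, play Kotzig's Lemma~\ref{Kotzig} against the small gap forced by the consecutive edge-sums), but the key inequality you write down does not hold as stated. After deleting the single vertex $u$, your set $S'$ consists of \emph{all} sums $f(x)+f(y)$ over edges of $G-u$, so $\left\vert S'\right\vert=\binom{n-1}{2}-\alpha$, not $\binom{m}{2}$. The clique sums $W$ form only a subset of $S'$, and the definition $\Gamma(S')=\max(S')-\min(S')+1-\left\vert S'\right\vert$ charges you for every element of $S'$: the sums coming from edges with an endpoint outside your clique can fill the holes left by $W$, so you cannot conclude $\Gamma(S')\geq\Gamma(W)=(m^{2}-5m+14)-\binom{m}{2}$. (In general $\Gamma$ is not monotone in the way you need: $W=\{1,5\}$ has gap $3$ while $S'=\{1,2,3,4,5\}\supseteq W$ has gap $0$.) The correct consequence of Kotzig's lemma in your setting is only $\Gamma(S')\geq(m^{2}-5m+14)-\bigl(\binom{n-1}{2}-\alpha\bigr)$ with $m=n-1-\alpha$; this is still quadratic in $n$ (leading coefficient $1/2$) against your linear upper bound $n-1$, so your argument can be repaired, but the displayed bound and the threshold $j(\alpha)$ you ``read off'' from $\tfrac{(n-1-\alpha)^{2}-9(n-1-\alpha)+28}{2}>n-1$ are not justified.

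The paper avoids this issue by deleting \emph{all} vertices incident with missing edges (at most $2\alpha$ of them, continuing until exactly $K_{n-2\alpha}$ remains), so that $S'$ is precisely the set of pairwise sums of the clique labels; then Kotzig's lemma gives $\Gamma(S')\geq(n-2\alpha)^{2}-5(n-2\alpha)+14-\binom{n-2\alpha}{2}$ directly, while the removal of at most $2\alpha(n-1)$ sums from the interval $S$ gives $\Gamma(S')\leq 2\alpha(n-1)$, and the quadratic-versus-linear comparison finishes the proof. Either fix (subtract the true $\left\vert S'\right\vert$, or delete the $\leq 2\alpha$ offending vertices as the paper does) closes the gap; as written, your lower-bound step fails.
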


\begin{proof}
For a fixed positive integer $\alpha$, assume that $n>2\alpha$, where $n$ is a positive integer.
Let $G \in K_{n}-\alpha e$ and suppose, to the contrary, that for every integer $n \in \mathbb{N}$,
there exists some $G \in K_{n}-\alpha e$ such that $\mu_{s}\left(G\right)<+\infty$.
Then there exists an injective function $f:V\left(G\right) \rightarrow \mathbb{N}$ such that the set
\begin{equation*}
S=\left\{f\left(x\right)+f\left(y\right) \left|xy \in E\left(G\right)\right\} \right.
\end{equation*}
is a set of $\left\vert E\left( G\right) \right\vert$ consective integers.
Also, notice that there are at most $2\alpha$ vertices that have degree at most $n-2$, 
since there are exactly $\alpha$ edges missing to form $K_{n}$.
So, if we elminate all vertices of degree at most $\left(n-2\right)$, 
then the resulting graph is a complete graph.
If it is $K_{n-2\alpha}$, then we are done; otherwise, keep elminating vertices until we arrive at $K_{n-2\alpha}$.

Now, consider the set $S^{\prime}=\left\{f\left(x\right)+f\left(y\right) \left|xy  \right. \in E\left(K_{n-2\alpha})\right\} \right.$.
Since  $S^{\prime}$ comes from the set $S$ by removing at most $2\alpha \left(n-1\right)$ sums induced by edges,
it follows that 
\begin{equation*}
\Gamma\left(S^{\prime}\right) \leq 2\alpha \left(n-1\right) \text{.}
\end{equation*}
On the other hand, it follows from Lemma \ref{Kotzig} that 
\begin{equation*}
\Gamma\left(S^{\prime}\right) \geq \left(n-2\alpha\right)^{2}-5\left(n-2\alpha\right)+14-\dbinom{n-2\alpha}{2} \text{.}
\end{equation*}
This together with the preceding inequality implies that
\begin{equation*}
\left(n-2\alpha\right)^{2}-5\left(n-2\alpha\right)+14-\dbinom{n-2\alpha}{2} \leq \Gamma\left(S^{\prime}\right) \leq 2\alpha \left(n-1\right) \text{.}
\end{equation*}
However, since the inequality
\begin{equation*}
\left(n-2\alpha\right)^{2}-5\left(n-2\alpha\right)+14-\dbinom{n-2\alpha}{2}>2\alpha \left(n-1\right)
\end{equation*}
is valid for all integers 
\begin{equation*}
n>\dfrac{\left(8\alpha+9\right)+\sqrt{\left(8\alpha+9\right)^{2}-\left(16\alpha^{2}+88\alpha+112\right)}}{2} \text{,}
\end{equation*}
this produces a contradiction.
\end{proof}

Observe that if we let $\alpha =2$ and we compute the minimum value of $n$ that satisfies the previous inequality, 
we get $n \geq 21$. 
This means that for $n \geq 21$, we have $\mu_{s}\left(G\right)=+\infty$ for any graph $G \in K_{n}-2e$. 
However, it is also known that $\mu_{s}\left(K_{n}\right)=\mu_{s}\left(K_{n}-e\right)= +\infty$. 
Therefore, $l\left(n\right) \leq \tbinom{n}{2}-2$ for $n \geq 21$. 
Continue in this manner, we can obtain upper bounds on $l\left(n\right)$ for sufficiently large integers $n$ as the next result indicates.

\begin{corollary}
\label{upper_bound_general}
For sufficiently large integers $n$,
\begin{equation*}
l\left(n\right) \leq \dbinom{n}{2}-\alpha \text{,}
\end{equation*}
where $\alpha$ is a fixed positive integer such that $n>2\alpha$.
\end{corollary}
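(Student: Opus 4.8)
The plan is to deduce the corollary directly from the preceding theorem, together with the already-cited fact that $\mu_{s}\left(K_{n}\right)=+\infty$ for every integer $n\geq5$. Fix a positive integer $\alpha$. By the definition of $l\left(n\right)$, proving that $l\left(n\right)\leq\binom{n}{2}-\alpha$ amounts to showing that every graph $G$ of order $n$ and size at least $\binom{n}{2}-\alpha$ satisfies $\mu_{s}\left(G\right)=+\infty$. Any such graph is obtained from $K_{n}$ by deleting at most $\alpha$ edges; that is, either $G=K_{n}$, or $G\in K_{n}-\beta e$ for some integer $\beta$ with $1\leq\beta\leq\alpha$.

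First I would dispose of the case $G=K_{n}$: for $n\geq5$ we already know $\mu_{s}\left(K_{n}\right)=+\infty$ from \cite{FIM3}. Next, for each integer $\beta$ with $1\leq\beta\leq\alpha$, the preceding theorem supplies an integer $j\left(\beta\right)$ such that $\mu_{s}\left(G\right)=+\infty$ for every $G\in K_{n}-\beta e$ whenever $n>j\left(\beta\right)$ and $n>2\beta$; since $\beta\leq\alpha$, the hypothesis $n>2\alpha$ already forces $n>2\beta$. Setting $N\left(\alpha\right)=\max\left\{5,\,2\alpha,\,j\left(1\right),\,j\left(2\right),\,\ldots,\,j\left(\alpha\right)\right\}$, we conclude that for every $n>N\left(\alpha\right)$ each of these finitely many cases is covered, so every graph of order $n$ and size at least $\binom{n}{2}-\alpha$ has infinite super edge-magic deficiency, and therefore $l\left(n\right)\leq\binom{n}{2}-\alpha$, which is exactly the asserted bound for all sufficiently large $n$.

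The one point that requires care — and the reason one cannot simply invoke the theorem for the single value $\alpha$ — is that having infinite super edge-magic deficiency is not inherited by subgraphs: a spanning subgraph of a graph with $\mu_{s}=+\infty$ may perfectly well be super edge-magic, so knowing $\mu_{s}\left(G\right)=+\infty$ for all $G\in K_{n}-\alpha e$ says nothing a priori about graphs missing fewer than $\alpha$ edges. Consequently one genuinely has to handle every number $\beta\leq\alpha$ of deleted edges separately and then take the maximum of the finitely many thresholds $j\left(\beta\right)$, adjoining the threshold $5$ coming from the complete-graph case. Beyond this bookkeeping there is no real obstacle; all the quantitative work is already carried by the preceding theorem and, through it, by Lemma \ref{Kotzig}.
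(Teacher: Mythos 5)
Your proposal is correct and is essentially the paper's own argument: the corollary is justified there only by the preceding ``continue in this manner'' remark, i.e., by invoking the theorem for each number $\beta\in\left[1,\alpha\right]$ of deleted edges together with the known facts $\mu_{s}\left(K_{n}\right)=\mu_{s}\left(K_{n}-e\right)=+\infty$, and your explicit threshold $N\left(\alpha\right)=\max\left\{5,2\alpha,j\left(1\right),\ldots,j\left(\alpha\right)\right\}$ is just a careful formalization of that. (A minor quibble: graphs missing fewer than $\alpha$ edges are spanning \emph{supergraphs}, not subgraphs, of members of $K_{n}-\alpha e$, but your conclusion that no monotonicity is available and each $\beta$ must be handled separately is sound.)
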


\section{An improved upper bound}

The \emph{prism} $D_{n}$ is defined to be the cartesian product of $C_{n}$ and $K_{2}$. 
The prism is also known to be the Cayley graph of the dihedral group $D_{n}$ with respect to the generating set $\left\{x, x^{-1}, y\right\}$.
It was proved in \cite {FIM} that if $n \geq 3$ is odd, then $D_{n}$ is super edge-magic, that is, $\mu_{s}\left(D_{n}\right)=0$ in this case. 
Ngurah and Baskoro \cite{NB} showed that if $n \geq4$ is even, then $D_{n}$ is not edge-magic, implying that $\mu_{s}\left(D_{n}\right)\geq \mu\left(D_{n}\right) \geq 1$ by definitions.
Imran, Baig, and Fe\u{n}ov\u{c}\'{i}kov\'{a} \cite{IBF} established the following upper bound for $\mu_{s}\left(D_{n}\right)$.
 
\begin{theorem}
\label{old_bound}
For integers $n$ with $n\equiv 0\pmod{4}$, 
\begin{equation*}
\mu_{s}\left(D_{n}\right) \leq 3n/2-1 \text{.}
\end{equation*}
\end{theorem}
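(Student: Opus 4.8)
The plan is to apply Lemma~\ref{trivial}. Since $D_{n}$ has $\left\vert V\left(D_{n}\right)\right\vert = 2n$ and $\left\vert E\left(D_{n}\right)\right\vert = 3n$, establishing $\mu_{s}\left(D_{n}\right) \leq 3n/2-1$ reduces to exhibiting an injective vertex labeling $f:V\left(D_{n}\right) \rightarrow \left[1, 7n/2-1\right]$ for which the $3n$ edge-sums $\left\{f\left(x\right)+f\left(y\right) \left|\right. xy \in E\left(D_{n}\right)\right\}$ form a set of $3n$ consecutive integers; the $3n/2-1$ elements of $\left[1, 7n/2-1\right]$ not used by $f$ are then assigned arbitrarily to the isolated vertices of $D_{n} \cup \left(3n/2-1\right)K_{1}$, which contribute no edges. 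Write the prism with outer cycle $u_{1}u_{2}\cdots u_{n}u_{1}$, inner cycle $v_{1}v_{2}\cdots v_{n}v_{1}$, and spokes $u_{i}v_{i}$ for $i \in \left[1, n\right]$, all indices read modulo $n$; the three edge classes (outer edges, spokes, inner edges) then contribute exactly $n$ sums each.

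I would look for an $f$ that handles the three classes separately, forcing the outer-cycle sums to fill one block $B_{1}$ of $n$ consecutive integers, the spoke sums a block $B_{2}$ of $n$ consecutive integers, and the inner-cycle sums a block $B_{3}$ of $n$ consecutive integers, with $B_{1}$, $B_{2}$, $B_{3}$ arranged so that $B_{1} \cup B_{2} \cup B_{3}$ is one interval of length $3n$. Concretely, define $f$ piecewise-linearly in the index $i$, the pieces being the residue classes of $i$ modulo $2$ (or modulo $4$), so that around each cycle a ``zigzag'' of labels --- small, large, next small, next large, \ldots --- makes the consecutive-vertex sums run through an arithmetic progression of common difference $1$, exactly as in the classical proof that paths are super edge-magic; the labels of the $v_{i}$ must be chosen so that $f\left(u_{i}\right)+f\left(v_{i}\right)$ simultaneously sweeps out $B_{2}$, and this coupling constraint between the two cycles dictates most of the construction. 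The hypothesis $n \equiv 0 \pmod{4}$ enters here: it splits $\left[1, n\right]$ into equal pieces whose images have precisely the lengths and parities needed for the three progressions to mesh, and the resulting labeling naturally leaves about $3n/2$ of the labels unused --- which is what produces the bound $3n/2-1$.

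Once an explicit formula for $f$ has been written down, the verification splits into three routine checks: (i) $f$ is injective with $f\left(V\left(D_{n}\right)\right) \subseteq \left[1, 7n/2-1\right]$; (ii) each of the outer-cycle, spoke, and inner-cycle sum-sets consists of consecutive integers; and (iii) the intervals $B_{1}$, $B_{2}$, $B_{3}$ are pairwise disjoint and their union is an interval of length $3n$, after which Observation~\ref{obervation_gap} and Lemma~\ref{trivial} complete the argument. I expect the main obstacle to lie in (ii)--(iii): the two wrap-around edges $u_{n}u_{1}$ and $v_{n}v_{1}$ break the clean zigzag and must be treated as special cases, and it is precisely the need to keep their sums inside --- indeed at the boundary of --- the intended intervals, without colliding with any other sum, that forces gaps in the label set and pins the deficiency at $3n/2-1$ rather than something smaller. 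Tightening exactly these gaps is presumably where a more careful choice of $f$ would improve the bound.
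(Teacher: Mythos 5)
There is a genuine gap: your proposal never produces the labeling it promises. Note first that the paper itself gives no proof of this statement --- it is quoted from Imran, Baig, and Fe\u{n}ov\u{c}\'{i}kov\'{a} \cite{IBF} --- so your attempt must be judged on its own terms, and on those terms the entire mathematical content of the theorem is the explicit function $f$, which is missing. Reducing the claim to Lemma~\ref{trivial} and deciding to split the $3n$ edge-sums into three blocks $B_{1}, B_{2}, B_{3}$ (outer cycle, spokes, inner cycle) is a reasonable opening, but everything after that is conditional: ``I would look for an $f$'', ``once an explicit formula has been written down'', ``I expect the main obstacle to lie in (ii)--(iii)''. You even identify the two wrap-around edges and the coupling $f\left(u_{i}\right)+f\left(v_{i}\right)$ between the cycles as the places where the construction could break, and then leave exactly those points unresolved. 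The claim that $n\equiv 0\pmod 4$ ``splits $\left[1,n\right]$ into pieces with the right lengths and parities'' is asserted, not verified; and the assertion that the construction ``naturally leaves about $3n/2$ labels unused --- which is what produces the bound'' reverses the logic, since the bound is what you are required to prove, not something the sketch is entitled to presuppose. As it stands this is a plan for a proof, not a proof.

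Two further remarks. First, your block strategy genuinely needs $n\equiv 0\pmod 4$: summing the edge-sums around an even cycle gives $2\sum f\left(u_{i}\right)$, which is even, while $n$ consecutive integers have total $na+n\left(n-1\right)/2$, odd when $n\equiv 2\pmod 4$; so the per-cycle consecutive-block scheme is impossible for $n\equiv 2\pmod 4$ --- a useful sanity check, but not a substitute for the construction when $n\equiv 0\pmod 4$. Second, if your aim is simply a rigorous proof of the stated bound using tools already in this paper, there is a much shorter route: Douglas and Reid give an $\alpha$-valuation of $D_{n}$ for even $n$, and the theorem from \cite{IO} then yields $\mu_{s}\left(D_{n}\right)\leq \left\vert E\left(D_{n}\right)\right\vert -\left\vert V\left(D_{n}\right)\right\vert +1=n+1$, which is at most $3n/2-1$ for all $n\geq 4$; this is exactly how the paper obtains its improved Theorem~\ref{improved_bound}, of which the statement you were asked to prove is a weaker consequence.
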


In this section, we provide an improved upper bound for $\mu_{s}\left(D_{n}\right)$ when $n \geq 4$ is even,
and propose an open problem for $\mu_{s}\left(D_{n}\right)$ when $n \geq 6$ is even.
To proceed, we introduce some additional definitions and results next.

The graph labeling method that has received the most attention over the years was originated with a paper by Rosa \cite{Rosa} who called them $\beta$-valuations. 
 A few years later, Golomb \cite{Golomb} called these labelings graceful and this is the term that has been used since then. For a graph $G$, an injective function $f:V\left(G\right) \rightarrow \left[1, \left\vert E\left( G\right) \right\vert \right]$  is called a  \emph{graceful labeling} if each $uv\in E\left( G\right) $ is labeled $\left\vert f\left(u\right)-f\left(v\right) \right\vert$ and the resulting edge labels are distinct. 
Rosa \cite{Rosa} also introduced the concept of $\alpha$-valuations (a particular type of graceful labelings) as a tool for decomposing the complete graph into isomorphic subgraphs. 
A graceful labeling $f$ is called an $\alpha \emph{-valuation}$ if there exists an integer $\lambda$ so that 

\begin{equation*}
  \min \left\{f\left(u\right), f\left(v\right)\right\} \leq \lambda < \max \left\{f\left(u\right), f\left(v\right) \right\}
\end{equation*}  
for each $uv\in E\left( G\right) \text{.}$  

Douglas and Reid \cite{DR} obtained the following result.

\begin{theorem}
For every integer $n \geq 2$, the prism $D_{2n}$ has an $\alpha$-valuation.
\end{theorem}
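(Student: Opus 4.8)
The plan is to exhibit, for each integer $n\geq 2$, an explicit $\alpha$-valuation of $D_{2n}$. Write the outer cycle of $D_{2n}$ as $u_{1}u_{2}\cdots u_{2n}u_{1}$, the inner cycle as $v_{1}v_{2}\cdots v_{2n}v_{1}$, and let the spokes be the edges $u_{i}v_{i}$ for $i\in\left[1,2n\right]$; then $\left\vert V\left(D_{2n}\right)\right\vert=4n$ and $\left\vert E\left(D_{2n}\right)\right\vert=6n$. Since $D_{2n}$ is the cartesian product of the bipartite graphs $C_{2n}$ and $K_{2}$, it is itself bipartite, and its (unique) bipartition $\left(X,Y\right)$ is balanced with $\left\vert X\right\vert=\left\vert Y\right\vert=2n$; concretely one may take $X=\left\{u_{i}:i\text{ odd}\right\}\cup\left\{v_{i}:i\text{ even}\right\}$ and $Y$ its complement. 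I would look for a graceful labeling $f$ of $D_{2n}$ with the extra feature that $f$ assigns to the vertices of $X$ the $2n$ smallest labels it uses and to the vertices of $Y$ the $2n$ largest; writing $\lambda$ for the largest label on $X$, the fact that $D_{2n}$ is connected and bipartite then yields $\min\left\{f\left(x\right),f\left(y\right)\right\}\leq\lambda<\max\left\{f\left(x\right),f\left(y\right)\right\}$ on every edge $xy$, so that $f$ is automatically an $\alpha$-valuation. Thus the task splits into: (a) making $f$ graceful, that is, forcing the $6n$ edge labels $\left\vert f\left(x\right)-f\left(y\right)\right\vert$ to run over $\left[1,6n\right]$; and (b) seating all the small labels on one colour class.

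I would produce $f$ from piecewise-linear formulas for $f\left(u_{i}\right)$ and $f\left(v_{i}\right)$ in the index $i$, cutting $\left[1,2n\right]$ into a few consecutive arcs and handling $n$ even and $n$ odd separately — the classical fact that $C_{2n}$ itself has an $\alpha$-valuation only when $4\mid 2n$ already warns that a parity split is unavoidable. The design target is that the $6n$ edges fall into three natural classes of $2n$ each, namely the outer-cycle edges, the inner-cycle edges, and the spokes, whose induced label sets are, in some order, the three blocks $\left[1,2n\right]$, $\left[2n+1,4n\right]$, and $\left[4n+1,6n\right]$; their union is then exactly $\left[1,6n\right]$. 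To guess where to place the labels I would follow the Hamiltonian cycle $u_{1}u_{2}\cdots u_{2n}v_{2n}v_{2n-1}\cdots v_{1}u_{1}$ of $D_{2n}$: a labeling that is roughly monotone along this cycle seats the small labels on $X$ and the large ones on $Y$, and puts the label $0$ next to the label $6n$ — consistent with the fact that the edge carrying the largest label $6n$ must join the vertices labelled $0$ and $6n$.

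With the formulas written down, the verification is routine but long: one checks that $f$ is injective on $X$, injective on $Y$, and that these two label ranges are disjoint — so $f$ is a bijection onto its image and the threshold $\lambda$ exists — and then evaluates the three families of differences, confirming that as $i$ ranges over its arcs each family sweeps one of the blocks $\left[1,2n\right]$, $\left[2n+1,4n\right]$, $\left[4n+1,6n\right]$ exactly once; each of these reduces to a handful of arithmetic-progression identities together with boundary checks where consecutive arcs meet. I expect the genuine obstacle to be the interplay of (a) and (b): one must place the $2n$ small labels on $X$ and the $2n$ large ones on $Y$ and, simultaneously, make the three difference-blocks come out disjoint and gap-free with $f$ still injective — a tight fitting problem with almost no slack, which is precisely why the construction forks on the parity of $n$ (and most likely on $n\bmod 4$, with the smallest values such as $n=2$ treated separately); once the cases are correctly set up, verifying each is mechanical.
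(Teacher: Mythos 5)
You should first be aware that the paper contains no proof of this statement: it is imported verbatim as a known theorem of Douglas and Reid (\emph{Labeling grids}, Ars Combin.\ 34 (1992)), so there is no in-paper argument to measure your attempt against. Your framing is sound as far as it goes: $D_{2n}$ is indeed bipartite with the balanced bipartition you describe, and if a graceful labeling places the $2n$ smallest labels on one colour class and the $2n$ largest on the other, then every edge straddles the threshold $\lambda$ and the labeling is automatically an $\alpha$-valuation (granting the standard convention that graceful labels live in $\left[0,\left\vert E\left(G\right)\right\vert\right]$; the paper's stated range $\left[1,\left\vert E\left(G\right)\right\vert\right]$ is a typo, and your references to the labels $0$ and $6n$ follow the correct convention).

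The genuine gap is that what you have written is a plan for a proof, not a proof. All of the substance is deferred to ``piecewise-linear formulas'' for $f\left(u_{i}\right)$ and $f\left(v_{i}\right)$ that are never exhibited, and to a verification, also never carried out, that the outer edges, inner edges and spokes receive the blocks $\left[1,2n\right]$, $\left[2n+1,4n\right]$ and $\left[4n+1,6n\right]$ in some order. But the existence of labels meeting both constraints simultaneously --- gracefulness and the seating of all small labels on one colour class --- is exactly the content of the theorem; you yourself call it ``a tight fitting problem with almost no slack,'' and naming the difficulty does not discharge it. Nothing in the proposal rules out the possibility that your particular design target (each of the three natural edge classes sweeping one full block of $2n$ consecutive differences) is unattainable for some residues of $n$, in which case the construction would have to be redesigned rather than merely ``verified mechanically''; indeed your own warning about $C_{2n}$ admitting an $\alpha$-valuation only when $n$ is even shows that block structures of this kind can fail for parity reasons. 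Until explicit formulas are written down, case-split as needed, and the injectivity and difference computations are actually checked, the statement remains unproved by this route.
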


The following result found in \cite{IO} shows how $\alpha$-valuations are useful for computing the super edge-magic deficiency of certain graphs.

\begin{theorem}
Let $G$ be a graph without isolated vertices that has an $\alpha$-valuation. Then 
\begin{equation*}
\mu_{s}\left (G\right) \leq \left\vert E\left( G\right) \right\vert-\left\vert V\left( G\right) \right\vert + 1 \text{.}
\end{equation*}
\end{theorem}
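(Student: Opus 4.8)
The plan is to reduce the claim to Lemma~\ref{trivial}. Write $p=|V(G)|$ and $q=|E(G)|$, and put $n=q-p+1$; an $\alpha$-valuation is in particular a graceful labeling, so $G$ has at most $q+1$ vertices and hence $n\ge 0$ (the no-isolated-vertices hypothesis being a convenient normalization under which the asserted bound is genuine). It then suffices to construct a bijection $g\colon V(G\cup nK_1)\to[1,q+1]$ for which $\{g(x)+g(y):xy\in E(G)\}$ is a set of $q$ consecutive integers: Lemma~\ref{trivial} will promote such a $g$ to a super edge-magic labeling of $G\cup nK_1$, yielding $\mu_s(G)\le n$.

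First I would record the structure forced by an $\alpha$-valuation $f$ with parameter $\lambda$. The sets $A=\{v:f(v)\le\lambda\}$ and $B=\{v:f(v)>\lambda\}$ partition $V(G)$, every edge of $G$ joins $A$ to $B$ (so $G$ is bipartite), and, since the edge labels of a graceful labeling are exactly $1,2,\ldots,q$, the numbers $f(v)-f(u)$ taken over all edges $uv$ with $u\in A$, $v\in B$ are precisely $1,2,\ldots,q$.

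Next I would \emph{fold} the labeling across this cut: keep the low side almost fixed and reflect the high side, setting $g(u)=f(u)+1$ for $u\in A$ and $g(v)=c-f(v)$ for $v\in B$, where $c$ is chosen so that the image of $B$ sits just above the image of $A$ with both inside $[1,q+1]$ (concretely $c=q+\lambda+2$, giving $g(A)\subseteq[1,\lambda+1]$ and $g(B)\subseteq[\lambda+2,q+1]$). Then $g$ is injective on $V(G)$ and its image is a $p$-element subset of $[1,q+1]$; I would extend $g$ to $G\cup nK_1$ by handing the $n=q+1-p$ leftover labels of $[1,q+1]$ bijectively to the isolated vertices. For an edge $uv$ with $u\in A$, $v\in B$ one computes $g(u)+g(v)=(q+\lambda+3)-(f(v)-f(u))$, so as $uv$ ranges over $E(G)$ these sums sweep out exactly the block $[\lambda+3,\,q+\lambda+2]$ of $q$ consecutive integers, and Lemma~\ref{trivial} (equivalently, Observation~\ref{obervation_gap}) finishes the proof.

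The step I expect to be most delicate is fixing the reflection constant $c$: it must be chosen so that, simultaneously, (i) every value of $g$ lies in $[1,q+1]$, (ii) $g(A)$ and $g(B)$ are disjoint, (iii) exactly $n$ labels of $[1,q+1]$ remain for the $nK_1$ part, and (iv) the edge sums form a single consecutive run. Verifying all four together means tracking the extreme labels $\min f(A)$ and $\max f(B)$ and remembering that $\lambda$ itself need not be attained as a vertex label, so $g(A)$ and $g(B)$ may have internal gaps — gaps that are exactly what the isolated vertices fill. Once this bookkeeping is settled, the rest is an immediate appeal to Lemma~\ref{trivial}, together with the elementary fact that a graceful labeling forces $p\le q+1$, which is precisely what makes $n$ nonnegative and the asserted bound well posed.
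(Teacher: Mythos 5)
The paper states this theorem without proof, citing \cite{IO}, so there is no internal argument to compare against; your folding construction ($g(u)=f(u)+1$ on the side with $f(u)\leq\lambda$, $g(v)=q+\lambda+2-f(v)$ on the other side, the leftover labels of $[1,q+1]$ handed to the $n=q-p+1$ isolated vertices, then Lemma~\ref{trivial} applied to the resulting run $[\lambda+3,q+\lambda+2]$ of edge sums) is correct and is essentially the standard proof of this result. The only caveat is notational: your constants presuppose the usual graceful convention $f\colon V(G)\rightarrow[0,\left\vert E(G)\right\vert]$, whereas the paper misprints the codomain as $[1,\left\vert E(G)\right\vert]$, so the bookkeeping shifts by one under the paper's literal definition, but the argument itself is unaffected.
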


For every integer $n \geq 3$, we have $\left\vert V\left( D_{n}\right) \right\vert =2n$ and $\left\vert E\left( D_{n}\right) \right\vert=3n$. 
Thus, the next result is redily followed from the preceding theorems. This improves the bound given in Theorem \ref{old_bound}.

\begin{theorem}
\label{improved_bound}
For even integers $n$ with $n \geq 4$,
\begin{equation*}
\mu_{s}\left(D_{n}\right) \leq n+1 \text{.}
\end{equation*}
\end{theorem}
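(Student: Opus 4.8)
The plan is to derive the bound as an immediate consequence of the two theorems just quoted, together with the elementary parameters of the prism. First I would write $n=2m$ with $m\geq 2$, which is possible precisely because $n\geq 4$ is even; then $D_{n}=D_{2m}$ is a prism over an even cycle, so the Douglas--Reid theorem guarantees that $D_{n}$ admits an $\alpha$-valuation.

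Next I would invoke the result of \cite{IO}: since $D_{n}$ has no isolated vertices and carries an $\alpha$-valuation,
\begin{equation*}
\mu_{s}\left(D_{n}\right) \leq \left\vert E\left(D_{n}\right)\right\vert - \left\vert V\left(D_{n}\right)\right\vert + 1 \text{.}
\end{equation*}
Finally, substituting $\left\vert V\left(D_{n}\right)\right\vert = 2n$ and $\left\vert E\left(D_{n}\right)\right\vert = 3n$ (valid for every $n\geq 3$) yields $\mu_{s}\left(D_{n}\right)\leq 3n-2n+1 = n+1$, as claimed.

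The argument has essentially no obstacle of its own; the only point demanding care is bookkeeping on indices, namely checking that the family $D_{2n}$ with $n\geq 2$ appearing in the Douglas--Reid theorem is exactly the family of prisms $D_{n}$ with $n\geq 4$ even treated here, so that the $\alpha$-valuation is genuinely available for every admissible $n$. Once that is confirmed, the chain of inequalities above is purely arithmetic. I would also record explicitly why this improves Theorem \ref{old_bound}: the bound $n+1$ holds for \emph{all} even $n\geq 4$ rather than only for $n\equiv 0\pmod 4$, and $n+1<3n/2-1$ for every $n\geq 6$, so the estimate is strictly sharper on the range where the old one applied.
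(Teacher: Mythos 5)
Your proposal is correct and follows exactly the paper's argument: the paper likewise combines the Douglas--Reid theorem (the prism over an even cycle has an $\alpha$-valuation) with the bound $\mu_{s}\left(G\right) \leq \left\vert E\left(G\right)\right\vert - \left\vert V\left(G\right)\right\vert + 1$ from \cite{IO}, and substitutes $\left\vert V\left(D_{n}\right)\right\vert = 2n$, $\left\vert E\left(D_{n}\right)\right\vert = 3n$. Your extra care with the index shift $D_{2m}$ versus even $n$ and the explicit comparison with Theorem \ref{old_bound} are sound but add nothing beyond the paper's reasoning.
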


It is known from \cite{IO} that $\mu_{s}\left (Q_{3}\right)=5$, where $Q_{3}$ is the $3$-cube. 
Since $D_{4}=Q_{3}$, it follows that $\mu_{s}\left (D_{4}\right)=5$. 
This indicates that the bound given in Theorem \ref{improved_bound} is attained for $n=4$.
However, there is no knowledge whether $\mu_{s}\left(D_{n}\right)=n+1$ for even integers $n$ with $n \geq 6$ so far.
This motivates us to propose the following problem.

\begin{problem}
Determine whether 
\begin{equation*}
\mu_{s}\left(D_{n}\right) = n+1
\end{equation*}
for even integers $n$ with $n \geq 6$. 
\end{problem}

\section{A new approach}

In this section, we propose a new approach to attak Conjecture \ref{super_edge-magic_tree_conjecture}.
For this reason, we now provide the definition for the key concept to be discussed below.

For a graph $G$, a \emph{numbering} $f$ of $G$ is a labeling that assigns distinct elements of the
set $\left[1, \left\vert V\left( G\right) \right\vert \right]$ to the vertices of $G$, where each $uv\in E\left(
G\right) $ is labeled $f\left( u\right) +f\left( v\right) $. 
The \emph{strength} \textrm{str}$_{f}\left( G\right) $ \emph{of a numbering} $%
f:V\left( G\right) \rightarrow \left[1, \left\vert V\left( G\right) \right\vert \right]$ of $G$ is defined by%
\begin{equation*}
\mathrm{str}_{f}\left( G\right) =\max \left\{ f\left( u\right)+f\left(
v\right) \left| uv\in E\left( G\right) \right. \right\} \text{,}
\end{equation*}
that is, $\mathrm{str}_{f}\left( G\right) $ is the maximum edge label of $G$
and the \emph{strength\ }\textrm{str}$\left( G\right) $ of a graph $G$
itself is 
\begin{equation*}
\mathrm{str}\left( G\right) =\min \left\{ \mathrm{str}_{f}\left( G\right)
\left| f\text{ is a numbering of }G\right. \right\} \text{.}
\end{equation*}

This type of numberings was introduced in \cite{IMO} as a generalization of the problem of finding
whether a graph is super edge-magic or not (see Lemma \ref{trivial} or consult \cite{AH} for
alternative and often more useful definitions of the same concept).

There are other related parameters that have been studied in the area of
graph labelings. Excellent sources for more information on this topic are
found in the extensive survey by Gallian \cite{Gallian}, which also includes
information on other kinds of graph labeling problems as well as their
applications.

Several bounds for the strength of a graph have been found in terms of
other parameters defined on graphs (see \cite{GLS, IMT, IMO, IMOT}).
The strengths of familiar classes of graphs were found in \cite{IMO}. 
The strength of trees was also determined by Gao, Lau, and Shiu \cite{GLS} as the next result indicates.

\begin{theorem}
\label{strength}
For every nontrivial tree $T$, 
\begin{equation*}
\mathrm{str}\left( T\right)= \left\vert V\left( T\right) \right\vert+1 \text{.}
\end{equation*}
\end{theorem}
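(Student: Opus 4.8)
The plan is to establish the two bounds $\mathrm{str}\left(T\right)\ge \left\vert V\left(T\right)\right\vert+1$ and $\mathrm{str}\left(T\right)\le \left\vert V\left(T\right)\right\vert+1$ separately; throughout write $n=\left\vert V\left(T\right)\right\vert\ge 2$. The lower bound is immediate: for any numbering $f$ of $T$, the vertex $u$ with $f\left(u\right)=n$ has at least one neighbour $v$ because $T$ is connected and nontrivial, and then the edge $uv$ carries the label $f\left(u\right)+f\left(v\right)\ge n+1$; hence $\mathrm{str}_{f}\left(T\right)\ge n+1$ for every numbering $f$, so $\mathrm{str}\left(T\right)\ge n+1$.

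For the upper bound I would prove the following more general claim by induction on the order: for every forest $F$ on $m$ vertices there is a bijection $g\colon V\left(F\right)\to\left[1,m\right]$ with $g\left(x\right)+g\left(y\right)\le m+1$ for each edge $xy$ of $F$. Taking $F=T$ and $m=n$ gives a numbering of $T$ with $\mathrm{str}_{g}\left(T\right)\le n+1$, hence $\mathrm{str}\left(T\right)\le n+1$, and combined with the lower bound this yields $\mathrm{str}\left(T\right)=n+1=\left\vert V\left(T\right)\right\vert+1$.

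In the induction, a forest with no edges admits any bijection, so assume $F$ has an edge. Choose a vertex $\ell$ of degree one and let $w$ be its unique neighbour, and set $F'=F-\ell-w$, a forest on $m-2$ vertices. By the induction hypothesis there is a bijection $g'\colon V\left(F'\right)\to\left[1,m-2\right]$ with $g'\left(x\right)+g'\left(y\right)\le m-1$ on every edge of $F'$. Define $g$ on $V\left(F\right)$ by $g\left(\ell\right)=m$, $g\left(w\right)=1$, and $g\left(x\right)=g'\left(x\right)+1$ for $x\in V\left(F'\right)$; this is a bijection onto $\left[1,m\right]$. Every edge of $F$ is now of exactly one of three kinds: the edge $\ell w$, with label $m+1$; an edge joining $w$ to some $x\in V\left(F'\right)$, with label $1+\left(g'\left(x\right)+1\right)\le m$; or an edge inside $F'$, with label $\left(g'\left(x\right)+1\right)+\left(g'\left(y\right)+1\right)\le\left(m-1\right)+2=m+1$. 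So no edge label exceeds $m+1$, completing the induction.

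I expect the main obstacle to be arriving at the correct inductive set-up rather than any computation. Removing only the leaf $\ell$ --- the first thing one tries --- fails, because reattaching $\ell$ with the label $m$ forces its neighbour to receive the label $1$, and no optimal numbering can be made to put a prescribed vertex at position $1$ (a leaf of a star, for instance, can never receive label $1$ in a numbering of strength $n+1$). Deleting the leaf together with its neighbour drops the order, and hence the budget $m+1$, by two at a stroke, after which the uniform shift $g=g'+1$ exactly absorbs the two freed labels $1$ and $m$ while raising each old edge label by precisely $2$; the fact that this still lands within the new budget is what makes the step close. Passing to forests instead of insisting on trees throughout is equally important, since deleting $\ell$ together with a high-degree vertex $w$ will in general disconnect the graph.
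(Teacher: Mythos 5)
Your proposal is correct, but note that the paper does not prove this theorem at all: it is quoted from Gao, Lau, and Shiu \cite{GLS}, so there is no in-paper argument to compare against, and what you have written is a self-contained proof of the cited result. Both halves of your argument check out. The lower bound is the special case (for $\delta\left(T\right)=1$) of the general bound $\mathrm{str}\left(G\right)\geq \left\vert V\left(G\right)\right\vert+\delta\left(G\right)$ known from the strength literature \cite{IMO}, obtained exactly as you do by looking at the vertex labelled $\left\vert V\left(G\right)\right\vert$. The upper bound via strong induction on forests is the genuinely nontrivial part, and your inductive step is sound: deleting a leaf $\ell$ together with its neighbour $w$, giving them the extreme labels $m$ and $1$, and shifting the inductive labeling by one keeps the edge $\ell w$ at exactly $m+1$, keeps edges at $w$ below $m+1$ because $w$ carries the label $1$, and raises internal edge labels by $2$ while the budget also rises by $2$; the classification of the edges of $F$ into these three types is exhaustive since $\ell$ has no other neighbours. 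Your closing remarks are also accurate (in a star, the centre is forced to receive label $1$ in any numbering of strength $n+1$, so removing only the leaf cannot work, and passing to forests is needed because $F-\ell-w$ may be disconnected). The payoff of your route is an elementary, purely combinatorial proof of a fact the paper imports as a black box in its proposed approach to Conjecture \ref{super_edge-magic_tree_conjecture}.
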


We are now ready to state the following conjecture, which may give us a viable approach towards settling Conjecture \ref{super_edge-magic_tree_conjecture}.

\begin{conjecture}
\label{strength_deficiency_conjecture}
For every nontrivial tree $T$, there exists some positive constant $c$ such that 
\begin{equation*}
\mathrm{str}\left( T\right) \geq c \cdot \mu_{s} \left(T\right)+ \left\vert V\left( T\right) \right\vert+1 \text{.}
\end{equation*}
\end{conjecture}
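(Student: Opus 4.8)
The plan is to split Conjecture~\ref{strength_deficiency_conjecture} into two halves and attack only the second one head on. By Theorem~\ref{strength}, $\mathrm{str}(T)=\left\vert V(T)\right\vert+1$ for every nontrivial tree $T$, so the displayed inequality is equivalent to $\mu_{s}(T)=0$; in this sense Conjecture~\ref{strength_deficiency_conjecture} is logically equivalent to Conjecture~\ref{super_edge-magic_tree_conjecture}. What makes it a genuinely new approach is that the value of $\mathrm{str}(T)$ should enter only at the very last step: it suffices to prove, by an argument that never presupposes what $\mathrm{str}(T)$ equals, the single quantitative implication ``small strength forces small super edge-magic deficiency'', i.e.\ that there is an absolute constant $C$ with
\begin{equation*}
\mu_{s}(T)\le C\bigl(\mathrm{str}(T)-\left\vert V(T)\right\vert-1\bigr)
\end{equation*}
for all nontrivial trees $T$, which is exactly Conjecture~\ref{strength_deficiency_conjecture} with $c=1/C$, and only then to invoke Theorem~\ref{strength} to conclude $\mu_{s}(T)=0$. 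So the whole task is to manufacture the constant $C$.

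I would do this constructively, starting from an optimal numbering rather than from the super edge-magic side. Fix a tree $T$ of order $p$ with bipartition $(A,B)$ and a numbering $f\colon V(T)\to[1,p]$ attaining $\mathrm{str}(T)$, and write $k=\mathrm{str}(T)-p$, so every edge label $f(u)+f(v)$ lies in the window $[3,p+k]$ of $p+k-2$ integers that must carry the $p-1$ edge labels with multiplicity, leaving only $k-1$ positions of slack. The aim is to transform $f$ into a super edge-magic labeling of $T\cup nK_{1}$ with $n=O(k)$. The transformation has three ingredients: (i) replace $f$ by $g(v)=af(v)+b$ using one shift $b$ on $A$ and a different shift on $B$, with the dilation $a$ and the shifts chosen so that $g$ stays injective on $V(T)$ and the $p-1$ edge sums become pairwise distinct; (ii) add isolated vertices, which absorb the unused labels, to fill the gaps between consecutive edge sums, so that by Lemma~\ref{trivial} the result is super edge-magic; (iii) read off $n=\max g(V(T))+(\text{number of gaps})-p$. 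If the quantities in (i) and (ii) can be held linear in $k$, then $\mu_{s}(T)=n\le C(k-1)=C\bigl(\mathrm{str}(T)-p-1\bigr)$, as wanted.

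The crux, and the step I expect to be the main obstacle, is holding $n$ \emph{linear} in $k$ in ingredients (ii) and (iii). A blunt global dilation by $a$ multiplies the span of the edge sums by roughly $a$, so filling the resulting gaps could demand $\Theta(pk)$ isolated vertices, which is useless. A linear bound must instead come from a structural dichotomy for trees: a numbering of strength $p+k$ should have at most $O(k)$ \emph{local defects} in its pattern of edge sums, each repairable by a bounded local relabeling together with $O(1)$ new isolated vertices, and the repairs should not interfere. Proving such a dichotomy is the genuinely new ingredient, and it must exploit that $T$ is a tree --- acyclicity, the supply of leaves, bipartiteness --- since no such relation can hold for dense graphs in general: for $K_{n+1}-e$ with $n\ge7$ the super edge-magic deficiency is $+\infty$ by Theorem~\ref{Kn+1-e}, although its strength is finite. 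I would first calibrate the machinery on families for which $\mu_{s}(T)=0$ is already known --- caterpillars, spiders, banana trees, and the small-diameter trees of Fukuchi and Oshima --- to isolate the right repair moves and to pin down $C$; a uniform treatment of those families would be good evidence that the approach scales to all trees. Finally, because harmonious-type labelings are governed by the same numberings read modulo $\left\vert E(T)\right\vert$, the same ``few defects, local repair'' mechanism would, if it succeeds, also bear on the conjecture of Graham and Sloane that every tree is harmonious.
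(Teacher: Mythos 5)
You were asked about Conjecture~\ref{strength_deficiency_conjecture}, which the paper itself poses as an open problem and does not prove; so there is no proof in the paper to compare against, and your text does not supply one either. Your opening reduction is sound: by Theorem~\ref{strength}, $\mathrm{str}(T)=\left\vert V(T)\right\vert+1$ for every nontrivial tree $T$, so the displayed inequality (for any positive $c$, whether chosen per tree or uniformly) holds if and only if $\mu_{s}(T)=0$, and the conjecture is therefore equivalent, modulo Theorem~\ref{strength}, to Conjecture~\ref{super_edge-magic_tree_conjecture} of Enomoto, Llad\'{o}, Nakamigawa, and Ringel, which has been verified only for trees with up to $17$ vertices. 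Everything after that observation is a research programme, not an argument: the ``structural dichotomy'' asserting that a numbering of strength $p+k$ has at most $O(k)$ local defects, each repairable by a bounded relabeling plus $O(1)$ isolated vertices without interference, is precisely the missing content, you acknowledge it is unproved, and none of your steps (i)--(iii) is actually carried out for any class of trees. Nothing is established.

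There is also a structural flaw in the quantitative reformulation you propose to attack. Since Theorem~\ref{strength} forces $\mathrm{str}(T)-\left\vert V(T)\right\vert-1=0$ for every nontrivial tree, the inequality $\mu_{s}(T)\le C\bigl(\mathrm{str}(T)-\left\vert V(T)\right\vert-1\bigr)$ has right-hand side identically zero, so proving it is literally proving that every tree is super edge-magic; no slack is gained by ``not presupposing'' the value of $\mathrm{str}(T)$. In your notation, $k=\mathrm{str}(T)-p=1$ whenever $f$ is a strength-optimal numbering of a tree, so the budget $C(k-1)$ of isolated vertices is zero exactly in the case that matters: you would have to show that some numbering with all edge sums in $[3,p+1]$ realizes each of those $p-1$ values exactly once, which is verbatim the super edge-magic condition of Lemma~\ref{trivial}. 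A repair machinery linear in $k$, even if calibrated successfully on caterpillars, spiders, or banana trees for $k\ge 2$, yields nothing at $k=1$, and any argument that works at $k=1$ bypasses the machinery entirely. The one caution you do get right is that such an argument must exploit tree structure, since Theorem~\ref{Kn+1-e} shows that finite strength cannot force finite super edge-magic deficiency for general graphs; but as it stands the proposal restates the conjecture in an equivalent form and defers the entire difficulty to an unproved lemma whose quantitative leverage vanishes where it is needed.
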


It is now immediate that if Conjecture \ref{strength_deficiency_conjecture} is true, then Theorem \ref{strength} impies Conjecture \ref{super_edge-magic_tree_conjecture}.

We next consider a graph labeling that is related to super edge-magic labelings. 
Harmonious labelings have been defined and studied by Graham and Sloane \cite{GS} as part of their study of additive bases and are applicable to error-correcting codes. 
A  \emph{harmonious labeling} of a graph $G$ with $\left\vert E\left( G\right) \right\vert \geq \left\vert V\left( G\right) \right\vert$ is an injective function $f:V\left(G\right) \rightarrow \left[1, \left\vert E\left( G\right) \right\vert-1 \right]$ satisfying the condition that the induced edge labeling given by $f\left(u\right) + f\left(v\right) \pmod{\left\vert E\left( G\right) \right\vert }$ for each $uv\in E\left( G\right) $ is also an injective function. 
Furthermore, $G$ is said to be \emph{harmonious} if such a labeling exists. 
This definition extends to trees (for which $\left\vert E\left( G\right) \right\vert = \left\vert V\left( G\right) \right\vert-1$) if at most one vertex label is allowed to be repeated. 

Grace \cite{Grace} introduced sequential graphs, a subclass of harmonious graphs, and showed that any tree admitting an $\alpha$-valuation is sequential and hence is harmonious. 
On the other hand, Lee, Schmeichel, and Shee \cite{LSS} introduced a generalization of harmonious graphs, namely, felicitous graphs.
The following relation among labelings of trees was established in \cite{FIM}.

\begin{theorem}
\label{relation}
If $T$ is a super edge-magic tree, then $T$ is sequential and harmonious.
\end{theorem}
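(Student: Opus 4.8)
The plan is to start from the reformulation of super edge-magicness supplied by Lemma~\ref{trivial} and then pass, by two elementary rescalings, first to a sequential labeling and then to a harmonious one. Write $p=\left\vert V\left(T\right)\right\vert$ and $q=\left\vert E\left(T\right)\right\vert=p-1$, using that $T$ is a tree. By Lemma~\ref{trivial}, a super edge-magic labeling of $T$ furnishes a bijection $f:V\left(T\right)\rightarrow\left[1,p\right]$ for which $S=\left\{f\left(u\right)+f\left(v\right)\mid uv\in E\left(T\right)\right\}$ is a set of $q$ consecutive integers, say $S=\left[s,s+q-1\right]$ with $s=\min\left(S\right)$. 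This set $S$, having exactly $q$ elements, is the object we will exploit: it is a complete residue system modulo $q$.

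For the sequential part I would put $g\left(v\right)=f\left(v\right)-1$ for every $v\in V\left(T\right)$. Then $g$ is a bijection from $V\left(T\right)$ onto $\left[0,p-1\right]=\left[0,q\right]$, and each edge $uv$ receives the label $g\left(u\right)+g\left(v\right)=f\left(u\right)+f\left(v\right)-2$, so the set of edge labels is $\left[s-2,s+q-3\right]$, again a set of $q$ consecutive integers. Since a tree on $q$ edges has $q+1$ vertices, a labeling of its vertices by distinct elements of $\left[0,q\right]$ whose induced edge sums are $q$ consecutive integers is exactly a sequential labeling in the sense of Grace; hence $T$ is sequential.

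For the harmonious part I would reduce $g$ modulo $q$: set $h\left(v\right)\equiv g\left(v\right)\pmod{q}$ with values in $\left[0,q-1\right]$. Because $g$ is a bijection onto $\left[0,q\right]$ and the reduction map $\left[0,q\right]\rightarrow\mathbb{Z}_{q}$ identifies only the two endpoints $0$ and $q$, the labeling $h$ takes each value in $\left[0,q-1\right]$ exactly once except for a single value that is taken twice; thus at most one vertex label is repeated, as the tree version of the definition permits. Moreover every edge $uv$ now gets the label $h\left(u\right)+h\left(v\right)\equiv g\left(u\right)+g\left(v\right)\pmod{q}$, and as $uv$ runs over the edges the sums $g\left(u\right)+g\left(v\right)$ form $q$ consecutive integers, hence a complete residue system modulo $q$; therefore the induced edge labels are pairwise distinct modulo $q$. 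This is precisely a harmonious labeling of $T$, so $T$ is harmonious. (If one insists on the exact range of vertex labels prescribed in the adopted definition, a cyclic shift $h\mapsto h+c\pmod{q}$, which only translates all edge labels by $2c$, lets one place the labels as required.)

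There is no deep obstacle here; the whole content is the single observation, already isolated above, that a set of $q$ consecutive integers is a complete system of residues modulo $q$, which is exactly what converts the ``consecutive edge sums'' condition of Lemma~\ref{trivial} into the ``distinct edge sums modulo $q$'' condition of harmoniousness. The points that require care are purely bookkeeping ones: confirming that the shifted labeling $g$ and the reduced labeling $h$ meet the precise definitions of sequential and harmonious labelings used in the references (in particular the allowance of one repeated vertex label in the tree case) and tracking the ranges of the labels. Alternatively, once $T$ is shown to be sequential one may simply quote Grace's implication that sequential graphs are harmonious, but the direct modular argument above is self-contained.
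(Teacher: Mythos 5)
Your argument is correct, but note that this paper contains no proof of Theorem \ref{relation} to compare against: the result is quoted from \cite{FIM}. Your proof is the standard one and essentially reproduces the argument behind the cited result: subtract $1$ from the super edge-magic vertex labeling guaranteed by Lemma \ref{trivial} to obtain a bijection onto $\left[0,q\right]$ with consecutive edge sums, which is a sequential labeling of a tree in Grace's sense, and then reduce modulo $q$, using the observation that $q$ consecutive integers form a complete residue system modulo $q$ (this second step is exactly Grace's proof that sequential graphs are harmonious, with the tree convention permitting one repeated vertex label since $0$ and $q$ collide). One small caveat: your closing parenthetical about a cyclic shift is unnecessary and, read against the definition of harmonious as printed in this paper (an injection into $\left[1,\left\vert E\left(G\right)\right\vert-1\right]$), cannot literally succeed, because your labeling of a tree occupies all $q$ residue classes with exactly one repetition, so no translation squeezes it into $q-1$ values; that printed range is a slip in the paper (Graham and Sloane take vertex labels in $\mathbb{Z}_{q}$, i.e.\ $\left[0,q-1\right]$, with one repetition allowed for trees), and your main argument already conforms to the correct definition.
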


As with super edge-magic labelings, many classes of trees have been shown to be harmonious (see \cite{Gallian} for a detailed list of trees), 
but whether all trees are harmonious is not known.

\begin{conjecture}
\label{harmonious_tree_conjecture}
Every nontrivial tree is harmonious.
\end{conjecture}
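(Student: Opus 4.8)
We outline an approach that would settle Conjecture~\ref{harmonious_tree_conjecture} on the basis of the results assembled above, rather than by building a harmonious labeling of an arbitrary tree by hand. By Theorem~\ref{relation}, every super edge-magic tree is sequential and harmonious, so it suffices to establish Conjecture~\ref{super_edge-magic_tree_conjecture}, i.e.\ that every nontrivial tree $T$ has $\mu_{s}(T)=0$. As already noted after Conjecture~\ref{strength_deficiency_conjecture}, this in turn follows from that conjecture together with Theorem~\ref{strength}: if $\mathrm{str}(T)\geq c\cdot\mu_{s}(T)+\left\vert V(T)\right\vert+1$ for some fixed $c>0$, then substituting the exact value $\mathrm{str}(T)=\left\vert V(T)\right\vert+1$ yields $c\cdot\mu_{s}(T)\leq0$, whence $\mu_{s}(T)=0$. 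Thus the whole burden is shifted onto Conjecture~\ref{strength_deficiency_conjecture}, and that is the step I expect to be the real obstacle.

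It is worth recording why one attacks the inequality rather than $\mu_{s}(T)=0$ head-on. A strength-optimal numbering $f$ of a tree forces every edge sum into the interval $\left[3,\left\vert V(T)\right\vert+1\right]$, which contains exactly $\left\vert E(T)\right\vert=\left\vert V(T)\right\vert-1$ integers; hence, by Lemma~\ref{trivial}, such an $f$ is already a super edge-magic labeling as soon as its induced edge sums are pairwise distinct. So Conjecture~\ref{super_edge-magic_tree_conjecture} is equivalent to the assertion that every tree admits a strength-optimal numbering with no repeated edge sum, and a proof of Conjecture~\ref{strength_deficiency_conjecture} only needs a \emph{lossy} quantitative version of this: a numbering of $T$ with $r$ colliding edge sums can be converted, at the cost of raising the maximum edge label by a bounded multiple of $r$ and adjoining a bounded multiple of $r$ many isolated vertices, into a super edge-magic labeling of $T\cup nK_{1}$. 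Iterating such a transfer step would produce the required linear bound $\mathrm{str}(T)\geq c\cdot\mu_{s}(T)+\left\vert V(T)\right\vert+1$.

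The hard part is exactly this transfer step. The parameter $\mathrm{str}(T)$ optimises over numberings of $T$ alone, whereas $\mu_{s}(T)$ optimises over all paddings $T\cup nK_{1}$, so there is no single labeling guaranteed to be near-optimal for both; moreover, removing a sum collision by a local relabeling typically propagates along the tree and inflates the maximum edge label in a way that is not obviously controlled by the number of collisions eliminated. A prudent order of attack is: first prove Conjecture~\ref{strength_deficiency_conjecture} for general graphs, or for trees with $c$ allowed to depend on the maximum degree or the diameter; then remove that dependence; and throughout, check the bound against the tree families for which super edge-magicness is already known -- caterpillars, trees of diameter at least $\left\vert V(T)\right\vert-5$, and the various banana trees -- before confronting an arbitrary tree.
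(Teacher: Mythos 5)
The statement you were asked to prove is not a theorem of the paper: it is the Graham--Sloane conjecture, which the paper (and the literature) leaves open. Your proposal does not prove it either. What you present is exactly the chain of implications the paper itself records: Theorem~\ref{relation} reduces Conjecture~\ref{harmonious_tree_conjecture} to Conjecture~\ref{super_edge-magic_tree_conjecture}, and Conjecture~\ref{strength_deficiency_conjecture} together with Theorem~\ref{strength} would yield Conjecture~\ref{super_edge-magic_tree_conjecture}, since $\mathrm{str}(T)=\left\vert V(T)\right\vert+1$ forces $c\cdot\mu_{s}(T)\leq 0$. Restating this reduction is not a proof: the entire weight rests on Conjecture~\ref{strength_deficiency_conjecture}, which is unproven, and your ``transfer step'' --- converting a numbering with $r$ colliding edge sums into a super edge-magic labeling of $T\cup nK_{1}$ at a cost bounded linearly in $r$ --- is only described, not constructed. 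You give no relabeling procedure, no control of how collisions propagate through the tree, and no bound on the number of isolated vertices adjoined; that missing construction is the entire mathematical content, so the proposal has a genuine (indeed open) gap.

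A smaller point: your claim that Conjecture~\ref{super_edge-magic_tree_conjecture} is \emph{equivalent} to the existence of a strength-optimal numbering with pairwise distinct edge sums is justified only in one direction. If a numbering attains $\mathrm{str}_{f}(T)=\left\vert V(T)\right\vert+1$ and its edge sums are distinct, then the sums fill the interval $\left[3,\left\vert V(T)\right\vert+1\right]$ and Lemma~\ref{trivial} applies, as you say. Conversely, a super edge-magic labeling of $T$ only guarantees \emph{some} interval of $\left\vert V(T)\right\vert-1$ consecutive sums, whose minimum need not be $3$; such a labeling need not be strength-optimal, so the reverse implication would require an additional shifting argument that you do not supply. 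This does not affect the reduction you rely on, but it should not be asserted as an equivalence without proof.
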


Of course, if Conjecture \ref{strength_deficiency_conjecture} is true, so is Conjecture \ref{super_edge-magic_tree_conjecture}.
Ineed, in light of Theorem \ref{relation}, the truth of Conjecture \ref{super_edge-magic_tree_conjecture} in turn 
implies that every nontrivial tree is sequential and the truth of the above conjecture due to Graham and Sloane \cite{GS}
as well as the fact that every nontrivial tree is felicitous.
%as well as every nontrivial tree is felicitous.

\end{document}